\newcommand{\CC}{{\mathbb C}}
\def\bege{\begin{equation}} \def\ende{\end{equation}}
\def\begr{\begin{eqnarray}} \def\endr{\end{eqnarray}}
\newtheorem{define}{\hspace{0em}Definition}
\newtheorem{lemma}{\hspace{0em}Lemma}
\def\BB{ \mathbb{B}}
\def\SS{ \mathbb{S}}
\def\CC{ \mathbb{C}}
\newcommand{\DD}{{\mathbb D}}
\def\om{\omega}
\def\begr{\begin{eqnarray}} \def\endr{\end{eqnarray}}
\newtheorem{Theorem}{Theorem}
\newtheorem{Proposition}{Proposition}
\newtheorem{Remark}{Remark}
\begin{document}
\title[The essential norm of Toeplitz operators]{The essential norm of Toeplitz operators between Bergman spaces induced by doubling weights}
\author{Peiying Huang and Guangfu Cao$\dagger$}
\address{Peiying Huang\\School of Mathematics and Information Science, Guangzhou University, Guangzhou 510006, China.}

\address{Guangfu Cao\\School of Mathematics and Information Science, Guangzhou University, Guangzhou 510006, China.}

\email{hwangpuiying@163.com}
\email{guangfucao@163.com}

\subjclass[2000]{32A36, 47B35}

\begin{abstract} 
This paper investigates the essential norm of Toeplitz operators $\mathcal{T}_\mu$ acting from the Bergman space $A_\omega^p$ to $A_\omega^q$ ($1 < p \leq q < \infty$) on the unit ball, where $\mu$ is a positive Borel measure and $\omega \in \mathcal{D}$ (a class of doubling weights). Leveraging the geometric properties of Carleson blocks and the structure of radial doubling weights, we establish sharp estimates for the essential norm in terms of the asymptotic behavior of $\mu$ near the boundary. As a consequence, we resolve the boundedness-to-compactness transition for these operators when $1 < q < p<\infty$, showing that the essential norm vanishes exactly. 
These results generalize classical theorems for the unweighted Bergman space ($\omega \equiv 1$) and provide a unified framework for studying Toeplitz operators under both radial and non-radial doubling weights in higher-dimensional settings.

\thanks{$\dagger$ Corresponding author.}
\vskip 3mm \noindent{\it Keywords}: Bergman space, essential norm, Toeplitz operator, doubling weight.
\end{abstract}
\maketitle

\section{Introduction}

Let $\BB$ be the open unit ball of $\CC^n$ and $\SS$ denote its boundary. Let \(\omega\) be a weight, that is, a non-negative, measurable and integrable function, defined on \([0,1)\). For \(1 \leq p < \infty\), the radially weighted Bergman spaces $A^p_\omega$ consists of holomorphic functions $f \in H(\mathbb{B})$ satisfying 
$$
\|f\|_{A^p_\omega}^p := \int_{\mathbb{B}} |f(z)|^p \omega(z) \, dV(z) < \infty,
$$
where \(dV\) is the normalized volume measure on \(\mathbb{B}\). 
Notably, when $p=2$, $A^2_\omega$ becomes a Hilbert space with its canonical inner product \(\langle\cdot,\cdot\rangle_{A^2_\omega}\) defined through the standard integral representation. Recall that the \textit{reproducing kernel} 
$$
B^\omega_z(\xi) = \frac{1}{2n!} \sum_{\gamma\in\mathbb{N}^{n}} \frac{(n-1+|\gamma|)!}{\gamma! \, \omega_{2n+2|\gamma|-1}} \langle \xi,z \rangle^{\gamma}, 
$$
where \(\langle \cdot, \cdot \rangle\) is the standard inner product on \(\mathbb{C}^n\), and \(\omega_k := \int_0^1 r^k \omega(r) \, dr\).

The \textit{Toeplitz operator} \(\mathcal{T}_\mu\) with symbol \(\mu\), a given positive Borel measure, is defined by
\[\mathcal{T}_\mu f(z) := \int_{\mathbb{B}} f(\xi) \overline{B_z^\omega(\xi)} \, d\mu(\xi).\]
Luecking \cite{Luecking-1987} introduced Toeplitz operators $\mathcal{T}_\mu$ with measures as symbols and characterized their Schatten class membership on the unit disk $\DD$. These operators play a pivotal role across diverse disciplines, with foundational applications spanning mathematical physics (e.g.\cite{Ali-2005,Antonio-2016}), probability, complex analysis and operator theory (e.g.\cite{Coburn-1974,McDonald-1979,Pau-2015,Zhu-1990}).

In this paper, we will restrict our attention to a special class of weights called \textit{(radially) doubling weights}, defined as follows.
\begin{define}\label{def:doubling}
Let \(\omega\) be a weight on \([0,1)\) and \(\hat{\omega}(r) := \int_r^1 \omega(t) \, dt\), \(r \in [0,1)\). We say that:
\begin{itemize}
\item \(\omega \in \hat{\mathcal{D}}\), if there exists \(C \geq 1\) such that 
\[\hat{\omega}(r) \leq C \hat{\omega}\left( \frac{1 + r}{2} \right), \quad \forall \, 0 \leq r < 1;\]
\item $\omega \in \check{\mathcal{D}}$, if there exist \(C, K > 1\) such that 
\begin{align}\label{1.1}
\hat{\omega}(r) \geq C \hat{\omega}\left(1 - \frac{1 - r}{K}\right), \quad \forall \,0 \leq r < 1.
\end{align}
\end{itemize}
Denote
\[K_\omega := \inf \left\{ K : K \text{ satisfies the above condition } \eqref{1.1} \right\}.\]
We define \(\mathcal{D} := \hat{\mathcal{D}} \cap \check{\mathcal{D}}\).
\end{define}

A radial and continuous weight \(\omega\) is regular if there exists \(C > 0\) and \(\delta \in (0,1)\) such that 
\[\frac{1}{C} < \frac{\hat{\omega}(r)}{(1 - r)\omega(r)} < C, \quad \text{for } r \in (\delta, 1). \]
We denote the collection of regular radial weights by \(\mathcal{R}\). It is easy to see that $\mathcal{R}\subseteq\mathcal{D}$. The weight class $\hat{\mathcal{D}}$ was introduced by Pel\'aez in \cite{Pja-2016}, where he systematically studied the weighted Bergman spaces $A^p_\omega(\DD)$. For further details on $\hat{\mathcal{D}}$, $\mathcal{D}$ and operator theory on weighted Bergman spaces induced by these classes, we refer readers to \cite{Pja-2016,PjaRj-2014-book,PjaRj2015,PjaRj2016,PjRj2016jmpa,PjRj2019arxiv,PjaRjSk2015mz,PjaRjSk2018jga}.

For any \( z \in \mathbb{B} \setminus \{0\} \), let \( P_z \) be the orthogonal projection of \( \mathbb{C}^n \) onto the one-dimensional subspace \([z]:=\{\lambda z : \lambda \in \mathbb{C}\}\) generated by \( z \), and let \( P_{z}^{\perp} \) be the orthogonal projection from \( \mathbb{C}^n \) onto \( \mathbb{C}^n \ominus [z] \). Explicitly,
\[
P_z(w) = \frac{\langle w, z \rangle}{|z|^2} z, \quad P_{z}^{\perp}(w) = w - \frac{\langle w, z \rangle}{|z|^2} z, \quad w \in \mathbb{B}.
\]
For \( z = 0 \), we define \( P_0(w) = 0 \) and \( P_0^{\perp}(w) = w \). The pseudo-hyperbolic distance between $z$, $w\in\mathbb{B}$ is given by
\[
\rho(z,w) = \left| \frac{z - P_z(w) - \sqrt{1 - |z|^2} P_{z}^{\perp}(w)}{1 - \langle w, z \rangle} \right|.
\]
The Bergman metric \( \beta(\cdot, \cdot) \) on \( \mathbb{B} \) is defined as
\[\beta(z,w) := \frac{1}{2} \log \frac{1 + \rho(z,w)}{1 - \rho(z,w)}, \quad z, w \in \mathbb{B},\]
and the Bergman metric ball centered at \( z \) with radius \( r > 0 \) is
$$D(z,r) := \left\{w \in \mathbb{B} : \beta(z,w) < r\right\}.$$

To be more explicit on our results, we recall the Carleson measure associated with the weight $\omega$. For any $\xi,\tau\in\overline{\BB}$, define the nonisotropic metric $d(\xi,\tau)=|1-\langle \xi,\tau\rangle|^\frac{1}{2}$. For $r\in(0,1)$ and $\zeta\in\SS$, let
$$Q(\zeta,r)=\{\tau\in \SS:d(\zeta,\tau)\leq r\}.$$
$Q(\zeta,r)$  is a ball in $\SS$. More information about $d(\cdot,\cdot)$ and $Q(\zeta,r)$ can be found in \cite{Rw1980,Zk2005}.

For $a\in\BB\backslash\{0\}$, define $Q_a=Q(\frac{a}{|a|},\sqrt{1-|a|})$ and the associated Carleson block
$$S_a=S(Q_{a})=\left\{z\in\BB:\frac{z}{|z|}\in Q_{a},|a|<|z|<1\right\}.$$
For $a=0$, set $Q_a=\SS$ and $S_a=\BB$.  
The measure of a set $E\subset\BB$ with respect to $\om$ is given by $\om(E)=\int_E \om(z)dV(z)$. Further details on Carleson block and related analysis can be found in \cite{DjLsLxSy2019arxiv,Rw1980,Zk2005}.

Throughout this paper, the letter $C$ denotes a constant whose value may vary between different occurrences but remains independent of relevant parameters.
The notation $A \lesssim B$ signifies that $A\leq CB$ for some positive constant $C$. Similarly, $A \simeq B$ means $A\lesssim B$ and $B\lesssim A$.
 
We summarize key properties of doubling weights below.

\begin{lemma}\label{hat-d-eq-cd}
Let \(\omega\) be a radial weight and define $\omega^*(z) := \int_{|z|}^1 \omega(s) \log \frac{s}{|z|} s \, ds$ for $z \in \mathbb{B} \setminus \{0\}\). Then the following conditions are equivalent:
\begin{itemize}
\item[(i)]\(\omega \in \hat{\mathcal{D}}\);
\item[(ii)]\(\omega^*(z) \simeq (1 - |z|)\hat{\omega}(z)\) as \(|z| \to 1\);
\item[(iii)]There exist $C=C(\omega)>0$ and \(\beta = \beta(\omega) > 0\) such that
\begin{align*}
	\hat{\omega} (r) \leq C \left( \frac{1 - r}{1 - t} \right)^{\boldsymbol{\beta}} \hat{\omega} (t), \quad 0 \leq r \leq t < 1.
\end{align*}
\end{itemize}
\end{lemma}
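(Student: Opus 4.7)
I would prove the cycle $(iii) \Rightarrow (i) \Rightarrow (ii) \Rightarrow (iii)$ after a preliminary reduction of $\omega^*$. The preliminary step writes $\log(s/|z|) = \int_{|z|}^{s} du/u$, swaps the order of integration by Fubini, and uses that both $u$ and $s$ are bounded between $1/2$ and $1$ when $|z|$ is close to $1$, to collapse
\[
\omega^{*}(z) \;\simeq\; \int_{|z|}^{1} \hat{\omega}(u)\, du \qquad \text{as } |z| \to 1.
\]
This reduces the target of (ii) to the one-variable comparison $F(r) := \int_r^1 \hat{\omega}(s)\, ds \simeq (1-r)\hat{\omega}(r)$ with $r = |z|$.

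\textbf{The easy equivalence $(i) \Leftrightarrow (iii)$.} The direction $(iii) \Rightarrow (i)$ is immediate by taking $t = (1+r)/2$. For the converse, given $r \leq t < 1$, I choose $k \in \NN$ with $2^k \leq (1-r)/(1-t) < 2^{k+1}$ and iterate the defining doubling inequality $k$ times to obtain
\[
\hat{\omega}(r) \leq C^{k}\, \hat{\omega}\!\left(1 - \frac{1-r}{2^{k}}\right) \leq C^{k}\,\hat{\omega}(t),
\]
the second inequality using monotonicity of $\hat{\omega}$ together with $1 - (1-r)/2^{k} \leq t$. Since $C^k \leq C \left(\frac{1-r}{1-t}\right)^{\log_{2}C}$, this establishes (iii) with $\beta = \log_2 C$.

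\textbf{The implication $(i) \Rightarrow (ii)$.} The upper bound $F(r) \leq (1-r)\hat{\omega}(r)$ is automatic from monotonicity of $\hat{\omega}$. For the matching lower bound, I restrict to $[r,(1+r)/2]$ and invoke (i):
\[
F(r) \;\geq\; \int_{r}^{(1+r)/2}\!\hat{\omega}(s)\,ds \;\geq\; \frac{1-r}{2}\,\hat{\omega}\!\left(\frac{1+r}{2}\right) \;\geq\; \frac{1-r}{2C}\,\hat{\omega}(r).
\]

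\textbf{The implication $(ii) \Rightarrow (iii)$ --- the main obstacle.} A naive integral splitting yields only $(2c - 1)\hat{\omega}(r) \leq \hat{\omega}((1+r)/2)$, where $c \in (0, 1]$ is the lower constant from (ii), and this is vacuous when $c \leq 1/2$ (which can happen when the doubling constant in (i) is large). To bypass this obstruction I would introduce the auxiliary function $H(r) := F(r)(1-r)^{-1/c}$ and compute
\[
H'(r) \;=\; (1-r)^{-1/c}\!\left[\frac{F(r)}{c(1-r)} - \hat{\omega}(r)\right] \;\geq\; 0
\]
by (ii). Monotonicity of $H$ yields $F(r)/F(t) \leq \left(\frac{1-r}{1-t}\right)^{1/c}$ for $r \leq t < 1$; combining with $c(1-r)\hat{\omega}(r) \leq F(r)$ and $F(t) \leq (1-t)\hat{\omega}(t)$ then gives
\[
\hat{\omega}(r) \;\leq\; \frac{1}{c}\,\hat{\omega}(t)\left(\frac{1-r}{1-t}\right)^{1/c - 1},
\]
which is (iii) with $\beta = 1/c - 1$ (the degenerate $c = 1$ would force $\omega$ to vanish near the boundary and can be excluded, so $\beta > 0$ strictly).
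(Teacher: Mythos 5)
Your proposal is essentially correct and, unlike the paper, actually supplies an argument: the paper disposes of this lemma in a single line by citing Lemma~A of Pel\'aez--R\"atty\"a--Sierra, so there is no internal proof to compare against. Your reduction of $\omega^*(z)$ to $F(r)=\int_r^1\hat{\omega}(s)\,ds$ via Fubini, the dyadic iteration for $(i)\Leftrightarrow(iii)$, and especially the Gronwall-type device for $(ii)\Rightarrow(iii)$ --- where you correctly diagnose that the naive splitting gives only $(2c-1)\hat{\omega}(r)\le\hat{\omega}((1+r)/2)$, vacuous for $c\le 1/2$, and circumvent this by showing $H(r)=F(r)(1-r)^{-1/c}$ is non-decreasing --- are all sound and reproduce the standard characterization of $\hat{\mathcal{D}}$ in a self-contained way.

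One slip needs repair in $(i)\Rightarrow(iii)$. With your choice $2^k\le(1-r)/(1-t)<2^{k+1}$ you indeed get $1-(1-r)/2^k\le t$, but since $\hat{\omega}$ is non-increasing this yields $\hat{\omega}\bigl(1-(1-r)/2^k\bigr)\ge\hat{\omega}(t)$ --- the wrong direction for your chain of inequalities. The fix is to iterate $k+1$ times: then $1-(1-r)/2^{k+1}\ge t$, hence $\hat{\omega}(r)\le C^{k+1}\hat{\omega}\bigl(1-(1-r)/2^{k+1}\bigr)\le C^{k+1}\hat{\omega}(t)$, and $C^{k+1}=C\cdot C^k\le C\bigl((1-r)/(1-t)\bigr)^{\log_2 C}$, which is exactly the bound you wanted. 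Two smaller housekeeping points: in $(ii)\Rightarrow(iii)$ you may simply assume $c<1$ without loss of generality (a valid lower constant remains valid after being decreased), so $\beta=1/c-1>0$ is automatic and no degenerate case needs excluding; and since (ii) is asserted only as $|z|\to1$, the monotonicity of $H$ gives (iii) first for $r_0\le r\le t<1$, after which the extension to all $0\le r\le t<1$ uses that $\hat{\omega}$ is pinched between $\hat{\omega}(r_0)>0$ and $\hat{\omega}(0)$ on $[0,r_0]$. Neither point affects the substance of the argument.
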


\begin{proof}
The assertions follow from \cite[Lemma A]{PjaRjSk2018jga}.
\end{proof}

\begin{lemma}(\cite[Lemma 2.2]{Djun2025TO})\label{lem:2.2} 
Let \(\omega \in \hat{\mathcal{D}}\). Then:
\begin{itemize}
\item[(i)] For any \(\alpha > -2\), \((1 - r)^\alpha \omega^*(r) \in \mathcal{R}\);
\item[(ii)] \(\omega(S_a) \simeq (1 - |a|)^n \hat{\omega}(a)\);
\item[(iii)] \(\hat{\omega}(z) \simeq \hat{\omega}(a)\), if \(1 - |z| \simeq 1 - |a|\).
\end{itemize}
\end{lemma}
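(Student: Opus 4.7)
My plan is to handle the three parts in order of increasing difficulty: (iii), (ii), and (i). All three rely on the monotonicity of $\hat{\omega}$, the polynomial decay estimate in Lemma~\ref{hat-d-eq-cd}(iii), and (for (i)) the identification $\omega^*(r)\simeq(1-r)\hat{\omega}(r)$ from Lemma~\ref{hat-d-eq-cd}(ii). The only quantitative geometric input is the standard estimate $\sigma(Q(\zeta,r))\simeq r^{2n}$ for the surface measure of a nonisotropic ball on $\SS$.

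I would start with (iii). Assume without loss of generality $|z|\le|a|$; monotonicity of $\hat{\omega}$ gives $\hat{\omega}(z)\ge\hat{\omega}(a)$ at once, while Lemma~\ref{hat-d-eq-cd}(iii) yields
\[\hat{\omega}(z)\le C\Bigl(\tfrac{1-|z|}{1-|a|}\Bigr)^{\beta}\hat{\omega}(a)\lesssim\hat{\omega}(a),\]
since the hypothesis $1-|z|\simeq 1-|a|$ bounds the ratio. For (ii), decompose the volume integral in polar coordinates, using that $\omega$ is radial and that $S_a=\{r\zeta:|a|\le r<1,\ \zeta\in Q_a\}$:
\[\omega(S_a)=2n\,\sigma(Q_a)\int_{|a|}^{1} r^{2n-1}\omega(r)\,dr,\qquad \sigma(Q_a)\simeq (1-|a|)^{n}.\]
For $|a|$ bounded away from $1$ both sides of the claim are comparable to positive constants; for $|a|$ near $1$, $r^{2n-1}\simeq 1$ on $[|a|,1]$, so the last integral is $\simeq\hat{\omega}(a)$.

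For (i), set $v(r):=(1-r)^{\alpha}\omega^{*}(r)$, which is continuous on $[0,1)$. By Lemma~\ref{hat-d-eq-cd}(ii), $v(r)\simeq(1-r)^{\alpha+1}\hat{\omega}(r)$ near $r=1$. To conclude $v\in\R$ I need $\hat{v}(r)\simeq(1-r)v(r)$, i.e.\ $\hat{v}(r)\simeq(1-r)^{\alpha+2}\hat{\omega}(r)$. The upper bound follows from monotonicity of $\hat{\omega}$ together with $\alpha+1>-1$:
\[\hat{v}(r)\lesssim \hat{\omega}(r)\int_{r}^{1}(1-s)^{\alpha+1}\,ds\simeq (1-r)^{\alpha+2}\hat{\omega}(r).\]
For the lower bound, restrict integration to $[r,(1+r)/2]$, where $\hat{\omega}(s)\gtrsim\hat{\omega}(r)$ by the $\hat{\mathcal{D}}$ doubling condition and $(1-s)\simeq(1-r)$; this gives the matching lower bound. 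I expect the lower bound in (i) to be the main obstacle, since the trivial monotonicity estimate only controls $\hat{\omega}(s)$ from above for $s\ge r$; one must use the half-interval restriction together with the threshold $\alpha>-2$ (which makes $(1-s)^{\alpha+1}$ integrable) to produce a genuine two-sided comparison.
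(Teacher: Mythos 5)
The paper offers no proof of this lemma at all---it is imported verbatim as \cite[Lemma 2.2]{Djun2025TO}---so there is no internal argument to compare against; what you have written is a self-contained proof, and it is correct. Part (iii) is exactly the standard two-line argument (monotonicity of $\hat{\omega}$ one way, Lemma~\ref{hat-d-eq-cd}(iii) the other). Part (ii) is the right polar-coordinate computation; the only points worth making explicit are that $\sigma(Q(\zeta,\sqrt{1-|a|}))\simeq(1-|a|)^{n}$ is the nonisotropic ball estimate from \cite{Rw1980,Zk2005}, and that the ``$|a|$ bounded away from $1$'' case implicitly uses $\hat{\omega}>0$ on $[0,1)$, which holds for every nontrivial $\omega\in\hat{\mathcal{D}}$ (if $\hat{\omega}$ vanished somewhere the doubling inequality would force $\omega\equiv 0$). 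Part (i) is where the content lies, and your two-sided estimate of $\hat{v}$ is sound: the upper bound uses only that $\hat{\omega}$ is decreasing and $\alpha+2>0$, and the half-interval lower bound via $\hat{\omega}\bigl(\tfrac{1+r}{2}\bigr)\gtrsim\hat{\omega}(r)$ works for both signs of $\alpha+1$ (for $\alpha+1<0$ the factor $(1-s)^{\alpha+1}$ is increasing in $s$, for $\alpha+1\ge 0$ it is bounded below by $((1-r)/2)^{\alpha+1}$ on the half-interval). Two small caveats you should record: the replacement $v(s)\simeq(1-s)^{\alpha+1}\hat{\omega}(s)$ from Lemma~\ref{hat-d-eq-cd}(ii) is only valid for $s$ near $1$, so the identity $\hat{v}(r)\simeq\int_{r}^{1}(1-s)^{\alpha+1}\hat{\omega}(s)\,ds$ should be asserted only for $r$ beyond the relevant threshold---harmless, since membership in $\mathcal{R}$ only requires the comparison on some $(\delta,1)$---and one should note that $(1-r)^{\alpha}\omega^{*}(r)$ is integrable near the origin (the logarithmic singularity of $\omega^{*}$ at $0$ is integrable), so that it is genuinely a weight.
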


\begin{lemma}\label{TO-lem2-3}
Let \(\omega \in \hat{\mathcal{D}}\). Then: 
\begin{itemize}
\item[(i)]For all \(z \in \mathbb{B}\), 
$
\|B_z^\omega\|_{H^\infty} \simeq \frac{1}{\omega(S_z)},
$
where \(H^\infty\) is the space of all bounded holomorphic functions on \(\mathbb{B}\).
\item[(ii)]For all \(z \in \mathbb{B}\) and $1<p<\infty$,
$\|B_z^\omega\|_{A_\omega^p} \simeq  \frac{1}{\omega(S_z)^{1-\frac{1}{p}}}.$
\item[(iii)]There exist constants \(C = C(\omega) > 0\) and \(\delta = \delta(\omega) \in (0,1)\) such that $|B_a^\omega(z)| \geq \frac{C}{\omega(S_a)}$, $z \in S_{a_\delta}$, $a \in \mathbb{B} \setminus \{0\}$,
where \(a_\delta = (1 - \delta(1 - |a|))\frac{a}{|a|}\).
\end{itemize}
\end{lemma}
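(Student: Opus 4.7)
The plan is to derive all three estimates from the Taylor expansion
\[B_z^\omega(\xi)=\sum_{k=0}^\infty c_k\,\langle\xi,z\rangle^k,\qquad c_k:=\frac{(n-1+k)!}{2\cdot n!\cdot k!\,\omega_{2n+2k-1}}>0,\]
obtained from the series in the statement by collapsing the multinomial sum over multi-indices $\gamma$ with $|\gamma|=k$. For $\omega\in\hat{\mathcal{D}}$, Lemma~\ref{hat-d-eq-cd}(iii) gives the moment estimate $\omega_{2n+2k-1}\simeq\hat\omega(1-1/k)$, so $c_k\simeq k^{n-1}/\hat\omega(1-1/k)$. A sum-to-integral comparison, using that both sums below are concentrated on $k\lesssim 1/(1-r)$, then yields
\[\sum_{k=0}^\infty c_k r^{2k}\simeq\sum_{k=0}^\infty c_k r^k\simeq\frac{1}{(1-r)^n\hat\omega(r)}\simeq\frac{1}{\omega(S_z)},\qquad r=|z|,\]
where the last $\simeq$ is Lemma~\ref{lem:2.2}(ii).

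For (i), the reproducing property gives $B_z^\omega(z)=\sum c_k|z|^{2k}=\|B_z^\omega\|_{A^2_\omega}^2\simeq 1/\omega(S_z)$, and hence $\|B_z^\omega\|_{H^\infty}\geq|B_z^\omega(z)|\gtrsim 1/\omega(S_z)$. Since each $c_k$ is positive, the trivial bound $|B_z^\omega(\xi)|\leq \sum_k c_k|z|^k|\xi|^k\leq \sum_k c_k|z|^k\simeq 1/\omega(S_z)$ produces the matching upper bound.

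For (ii), granted (iii) the lower bound follows by restriction:
\[\|B_z^\omega\|_{A^p_\omega}^p\geq\int_{S_{z_\delta}}|B_z^\omega(w)|^p\omega(w)\,dV(w)\gtrsim \omega(S_z)^{-p}\omega(S_{z_\delta})\simeq \omega(S_z)^{-(p-1)},\]
using $\omega(S_{z_\delta})\simeq\omega(S_z)$ from Lemma~\ref{lem:2.2}. For $p\geq 2$ the upper bound is the factorisation
\[\int|B_z^\omega|^p\omega\,dV\leq \|B_z^\omega\|_{H^\infty}^{p-2}\int|B_z^\omega|^2\omega\,dV=\|B_z^\omega\|_{H^\infty}^{p-2}\,B_z^\omega(z)\simeq\omega(S_z)^{-(p-1)};\]
for $1<p<2$ I would split $\BB$ into dyadic tubes around $z$ and apply the series-based off-diagonal bound $|B_z^\omega(w)|\lesssim 1/\omega(S_{zw})$, where $S_{zw}$ is the Carleson block attached to $\langle w,z\rangle/|z|$, and sum the geometric contributions via Lemma~\ref{lem:2.2}(ii).

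Part (iii) is the principal obstacle, as the Taylor series is not automatically positive off the diagonal. Writing $\langle z,a\rangle=|z||a|e^{i\theta}$, the hypothesis $z\in S_{a_\delta}$ forces $z/|z|\in Q_{a_\delta}$, whence $|1-\langle z/|z|,a/|a|\rangle|\leq \delta(1-|a|)$; taking imaginary parts yields $|\theta|\leq C\delta(1-|a|)$. By the moment analysis above, $\sum c_k|a|^{2k}\simeq 1/\omega(S_a)$ is concentrated on indices $k\leq K\simeq 1/(1-|a|)$, so for these $k$ one has $|k\theta|\leq C\delta$. Choosing $\delta$ small enough forces $\cos(k\theta)\geq 1/2$ for every $k\leq K$; combined with $(|z||a|)^k\simeq|a|^{2k}$ on this range, this gives
\[\mathrm{Re}\,B_a^\omega(z)\geq \sum_{k\leq K}c_k(|z||a|)^k\cos(k\theta)\gtrsim \sum_{k\leq K}c_k|a|^{2k}\simeq\frac{1}{\omega(S_a)},\]
and therefore $|B_a^\omega(z)|\geq \mathrm{Re}\,B_a^\omega(z)\gtrsim 1/\omega(S_a)$. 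The delicate point is that the tail $k>K$ may contribute negatively; this is absorbed by the exponential decay of $c_k|z|^k|a|^k$ there, a consequence of the quantitative doubling estimate in Lemma~\ref{hat-d-eq-cd}(iii), and this decay is ultimately what pins down how small $\delta$ must be.
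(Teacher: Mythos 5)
The paper itself does not prove this lemma; it only cites \cite[Lemmas 2.3 and 3.2]{Djun2025TO}, so your attempt can only be judged against the standard arguments in that literature. Your overall strategy --- collapse the kernel series to $\sum_k c_k\langle\xi,z\rangle^k$, use the moment asymptotics $\omega_{2n+2k-1}\simeq\hat\omega(1-1/k)$ and a sum-to-integral comparison --- is exactly the standard one, and part (i) together with the case $p\ge 2$ of (ii) are correct as written. Two steps, however, contain genuine gaps.

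In (iii) the tail is \emph{not} negligible at the cutoff you chose. The same computation that gives the two-sided bound $\sum_k c_k|a|^k\simeq (1-|a|)^{-n}\hat\omega(a)^{-1}$ shows that $\sum_{k>K}c_k|a|^k$ with $K\simeq 1/(1-|a|)$ is itself comparable to $1/\omega(S_a)$: the series is concentrated at $k\sim 1/(1-|a|)$ from \emph{both} sides, so there is no exponential smallness starting at $K$ and the negative contributions cannot be absorbed there. The correct bookkeeping is to cut at $K_M=M/(1-|a|)$: Lemma~\ref{hat-d-eq-cd}(iii) gives $\sum_{k>K_M}c_k|a|^k\lesssim M^{n-1+\beta}e^{-M}/\omega(S_a)$, one first fixes $M$ so large that this is at most half of the lower bound produced by the block $k\in[K/2,K]$, and only then chooses $\delta\simeq 1/M$ so that $|k\theta|\le CK_M\,\delta(1-|a|)=CM\delta$ stays below $\pi/3$ for \emph{all} $k\le K_M$, making every term up to $K_M$ nonnegative. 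Your closing sentence hints at this coupling, but as written the argument does not close. (A smaller slip: $\langle z,a\rangle=|z||a|e^{i\theta}$ is false in general; you must carry the modulus $\rho=|\langle z/|z|,a/|a|\rangle|\le 1$ and verify $1-\rho\le\delta(1-|a|)$, which is what keeps $\rho^k$ harmless for $k\le K_M$.)

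The case $1<p<2$ of (ii) rests on an unproved ``off-diagonal bound $|B_z^\omega(w)|\lesssim 1/\omega(S_{zw})$''. Termwise absolute values in the series only give decay in $1-|\langle w,z\rangle|$, not in $|1-\langle w,z\rangle|$; the genuine off-diagonal estimate requires a summation-by-parts argument and is a nontrivial lemma in its own right, so this cannot be waved through. The detour is also unnecessary: the paper already records Lemma~\ref{DjLsLxSy2019-lem4}, from which
\begin{align*}
\|B_z^\omega\|_{A^p_\omega}^p\simeq\int_0^1 s^{2n-1}\omega(s)\left(\int_0^{s|z|}\frac{dt}{\hat\omega(t)^p(1-t)^{np-n+1}}\right)ds
\lesssim\int_0^{|z|}\frac{dt}{\hat\omega(t)^{p-1}(1-t)^{np-n+1}}
\lesssim\frac{1}{\hat\omega(z)^{p-1}(1-|z|)^{n(p-1)}}
\end{align*}
for every $p>1$ (Fubini, then $\hat\omega(t)^{1-p}\le\hat\omega(|z|)^{1-p}$ for $t\le|z|$ since $1-p<0$, and $np-n+1>1$); combined with Lemma~\ref{lem:2.2}(ii) this yields the upper bound in (ii) uniformly in $p$ and makes the case split at $p=2$ unnecessary.
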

	
\begin{proof}
The first statement	is a consequence of \cite[Lemma 2.3]{Djun2025TO}; the last two statements follow from \cite[Lemma 3.2]{Djun2025TO}.
\end{proof}

\begin{lemma}(\cite[Lemma 2.4]{Djun2025TO})\label{lem:2.4}
	Suppose that \(1 < p, q < \infty\) and \(\omega \in \hat{\mathcal{D}}\). If \(\mu\) is a positive Borel measure and \(\mathcal{T}_\mu : A_\omega^p \to A_\omega^q\) is bounded, then \(\mathcal{T}_\mu\) is compact if and only if for every sequence \(\{f_k\}\) bounded in \(A_\omega^p\) that converges to 0 uniformly on compact subsets of \(\mathbb{B}\), 
	\[\lim_{k \to \infty} \|\mathcal{T}_\mu f_k\|_{A_\omega^p} = 0.\]
\end{lemma}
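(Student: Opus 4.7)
The plan is to establish the standard ``bounded sequence converging to zero on compacta'' criterion for compactness, which in the setting of Bergman spaces rests on two classical ingredients: the fact that point evaluations are bounded linear functionals on $A^p_\omega$ (so uniform convergence on compacta of a bounded sequence coincides with weak convergence), and Montel's theorem for normal families of holomorphic functions on $\mathbb{B}$.

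For the necessary direction, I would assume $\mathcal{T}_\mu : A^p_\omega \to A^q_\omega$ is compact and take $\{f_k\} \subset A^p_\omega$ bounded with $f_k \to 0$ uniformly on compact subsets of $\mathbb{B}$. The key observation is that point evaluation at any $z \in \mathbb{B}$ is given by pairing with the reproducing kernel $B_z^\omega$, which lies in $A^{p'}_\omega$ (the dual space) by Lemma \ref{TO-lem2-3}(ii); hence $f_k \rightharpoonup 0$ weakly in $A^p_\omega$. Because compact operators on reflexive spaces map weakly null sequences to norm null sequences, we conclude $\|\mathcal{T}_\mu f_k\|_{A^q_\omega}\to 0$.

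For the sufficient direction, I would argue by extracting convergent subsequences. Let $\{f_k\}$ be any bounded sequence in $A^p_\omega$; since point evaluations dominate suitable sup-norms on compact sets, $\{f_k\}$ forms a normal family on $\mathbb{B}$, and Montel's theorem yields a subsequence $\{f_{k_j}\}$ converging uniformly on compacta to some holomorphic $f$ on $\mathbb{B}$. Fatou's lemma applied inside the $A^p_\omega$-norm forces $f \in A^p_\omega$, so $g_j := f_{k_j} - f$ is bounded in $A^p_\omega$ and converges to $0$ uniformly on compacta. The hypothesis then gives $\|\mathcal{T}_\mu g_j\|_{A^q_\omega} \to 0$, which exhibits $\{\mathcal{T}_\mu f_{k_j}\}$ as a norm-convergent subsequence in $A^q_\omega$. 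Thus $\mathcal{T}_\mu$ maps bounded sets to relatively compact sets, i.e.\ is compact.

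The only mildly delicate step is justifying that convergence on compacta together with an $A^p_\omega$-norm bound implies weak convergence; this reduces to checking that reproducing kernels span a dense subspace of the dual, which follows from the standard reproducing formula on $A^p_\omega$ for $\omega \in \hat{\mathcal{D}}$ together with the kernel estimates of Lemma \ref{TO-lem2-3}. Once this is in place, both implications are routine, so I expect no substantial obstacle beyond citing the appropriate kernel and duality facts already recorded in the preliminaries.
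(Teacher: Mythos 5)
Your argument is essentially correct, but note that the paper does not actually prove this lemma: it is quoted directly from \cite[Lemma 2.4]{Djun2025TO}, so there is no internal proof to compare against; what you have written is the standard self-contained argument. The sufficiency half (local uniform boundedness from the point-evaluation estimate, Montel, Fatou, then applying the hypothesis to $g_j=f_{k_j}-f$) is exactly right. In the necessity half the conclusion you want --- that a norm-bounded sequence converging to $0$ on compacta is weakly null, after which complete continuity of the compact operator finishes the job --- is correct, but the justification you sketch is the one fragile point: you propose to realize the dual of $A^p_\omega$ through the $A^2_\omega$-pairing and the density of the span of the kernels $B_z^\omega$, which implicitly leans on the duality $(A^p_\omega)^*\simeq A^{p'}_\omega$, i.e.\ on the $L^p_\omega$-boundedness of the Bergman projection; for radial weights that boundedness is known to require $\omega\in\mathcal{D}$ rather than just $\omega\in\hat{\mathcal{D}}$, which is all the lemma assumes. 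A cleaner route that avoids identifying the dual altogether is to use that $A^p_\omega$ is reflexive (being a closed subspace of $L^p(\omega\,dV)$) together with the boundedness of point evaluations: any weakly convergent subsequence of $\{f_k\}$ must have weak limit $0$ because its weak limit is determined pointwise, and reflexivity plus boundedness then forces the whole sequence to be weakly null. With that substitution your proof is complete; note also that the target norm in the displayed limit of the statement should read $A^q_\omega$, as you correctly use throughout.
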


\begin{lemma}(\cite[Lemma 4]{DjLsLxSy2019arxiv})\label{DjLsLxSy2019-lem4}
Let \( 0 < p < \infty \), \(\omega\in \hat{\mathcal{D}}\). When \( |rz| > \frac{1}{4} \),
\[
M_p^p(r, B_z^{\omega}) \approx \int_0^{r|z|} \frac{1}{\hat{\omega}(t)^p (1 - t)^{np - n + 1}} \, dt.
\]
\end{lemma}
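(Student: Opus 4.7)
The plan is to reduce the spherical mean $M_p^p(r, B_z^\omega)$ to a one-variable integral, exploiting that $B_z^\omega(\xi)$ depends on $\xi$ only through the scalar $\langle\xi,z\rangle$, and then estimate the resulting mean by combining power-series techniques with the doubling hypothesis on $\omega$. The multinomial theorem applied to the kernel series gives $B_z^\omega(\xi) = \phi(\langle\xi,z\rangle)$, where
\[
\phi(w) := \sum_{k=0}^\infty c_k\, w^k, \qquad c_k := \frac{(n-1+k)!}{2n!\, k!\, \omega_{2n+2k-1}}.
\]
The push-forward of $d\sigma$ on $\SS$ under $\zeta\mapsto\langle\zeta,z\rangle$ is, up to a constant, $(1-|w|^2/|z|^2)^{n-2}\, dA(w)$ restricted to $|z|\DD$ (for $n\geq 2$; for $n=1$ it is the normalized circular measure on $|z|\partial\DD$), so
\[
M_p^p(r, B_z^\omega) \simeq \int_\DD \big|\phi(\alpha r|z|)\big|^p (1-|\alpha|^2)^{n-2}\, dA(\alpha).
\]

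For sharp asymptotics of $c_k$, integration by parts gives $\omega_m = m\int_0^1 s^{m-1}\hat\omega(s)\, ds$; splitting at $s=1-1/m$ and invoking Lemma \ref{hat-d-eq-cd}(iii) yields $\omega_{2n+2k-1} \simeq \hat\omega(1-1/k)$, which together with Stirling's formula gives $c_k \simeq k^{n-1}/\hat\omega(1-1/k)$. The one-variable mean above is then bounded from above via positivity, $|\phi(w)|\le\phi(|w|)$, and bounded from below by a Khinchin-type argument localized to the dominant dyadic band $k\simeq 1/(1-r|z|)$. Grouping the resulting sum $\sum_k c_k^p \rho^{kp}$ (with $\rho = r|z|$) dyadically, $k\in[2^j,2^{j+1})$, and applying Lemma \ref{hat-d-eq-cd}(iii) once more to replace $\hat\omega(1-1/k)$ by $\hat\omega(t)$ uniformly on $t\in[1-2^{-j},1-2^{-j-1}]$ converts the sum into a Riemann-sum approximation of $\int_0^{r|z|}\hat\omega(t)^{-p}(1-t)^{-(np-n+1)}\, dt$; the $(n-1)$-shift in the exponent is produced precisely by the integration against the disk weight $(1-|\alpha|^2)^{n-2}$.

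The main obstacle is the sharp lower bound on $M_p^p$, since positivity of the coefficients only yields the upper estimate automatically. One must show that on the disk $|\phi(\alpha r|z|)|^p$ cannot be much smaller, in measure, than the contribution of a single dominant dyadic block of indices; this is a Khinchin-type inequality (or, equivalently, an application of Hardy--Littlewood-type comparisons) restricted to that band. The hypothesis $|rz|>1/4$ is essential here, ensuring that the dominant block sits at large $k$ where both the coefficient asymptotics and the doubling comparisons are sharp, so that boundary and small-$k$ contributions can be absorbed into the implicit constants in $\simeq$.
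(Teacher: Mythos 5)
The paper offers no proof of this lemma; it is imported verbatim from \cite[Lemma 4]{DjLsLxSy2019arxiv}, whose argument does begin the way you do (kernel as a power series in $\langle\xi,z\rangle$, Rudin's push-forward formula for $\sigma$ under $\zeta\mapsto\langle\zeta,z\rangle$, the asymptotics $\omega_{2n+2k-1}\simeq\hat\omega(1-1/k)$, dyadic regrouping). Your setup is therefore sound, but your assessment of where the difficulty lies is inverted, and as written both halves of the estimate fail.

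The upper bound does \emph{not} follow from $|\phi(w)|\le\phi(|w|)$: replacing the circular $L^p$-mean of $\phi$ on each circle by its supremum there loses exactly one power of $(1-r|z|)$, i.e.\ your route yields $\int_0^{r|z|}\hat\omega(t)^{-p}(1-t)^{-(np-n+2)}\,dt$ rather than the stated exponent $np-n+1$. Test case: $\omega\equiv1$, $n=1$, $p=2$ gives $M_2^2(r,B_z^{\omega})\simeq(1-r|z|)^{-3}$, matching $\int_0^{r|z|}(1-t)^{-4}\,dt$, whereas the supremum bound only gives $(1-r|z|)^{-4}$. The sharp upper estimate is in fact the hard direction: one needs that the $L^p$-norm on the circle of a dyadic block $P_j$ with nonnegative, essentially block-constant coefficients is comparable to $2^{-j/p}P_j(1)$ (a Mateljevi\'c--Pavlovi\'c/Hardy--Littlewood type comparison), not merely bounded by $P_j(1)$. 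Symmetrically, the lower bound obtained by localizing to the single dominant band $k\simeq(1-r|z|)^{-1}$ recovers only the last dyadic piece of the integral, namely $(1-r|z|)^{-(np-n)}\hat\omega(r|z|)^{-p}$; this is comparable to the whole integral only when $np-n+1>1$, i.e.\ $p>1$. For $p\le1$ --- and the paper invokes the lemma precisely with $p=1$ in Proposition \ref{lem:schur} --- every band $2^j\le(1-r|z|)^{-1}$ contributes, and a single band is insufficient: for $\omega\equiv1$, $n=1$, $p=1/2$ the left-hand side is $\simeq\log\frac{1}{1-r|z|}$ while one band yields only $O(1)$. Finally, the quantity $\sum_k c_k^p\rho^{kp}$ you regroup is not $M_p^p$ for $p\ne2$ (there is no Parseval identity at general $p$), and its dyadic sum carries the wrong exponent. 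A correct proof must carry the full block decomposition through both inequalities rather than trading $M_p$ for $M_\infty$ on one side and for a single block on the other.
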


\begin{lemma}\label{check-d-eq-cd}
Let \(\omega\) be a radial weight. Then \(\omega \in \check{\mathcal{D}}\) if and only if there exist $C=C(\omega)>0$ and \(\alpha = \alpha(\omega) > 0\) such that
\begin{align*}
\hat{\omega} (t) \leq C \left( \frac{1 - t}{1 - r} \right)^{\boldsymbol{\alpha}} \hat{\omega} (r), \quad 0 \leq r \leq t < 1.
\end{align*}
\end{lemma}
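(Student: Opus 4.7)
The statement is a quantitative reformulation of the one-sided doubling condition $\check{\mathcal{D}}$, and I would prove it by separating the two implications and treating each as a direct iteration / specialization argument.

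\textbf{Sufficiency ($\Leftarrow$).} Suppose the power estimate $\hat{\omega}(t)\le C\bigl((1-t)/(1-r)\bigr)^{\alpha}\hat{\omega}(r)$ holds for all $0\le r\le t<1$. To recover condition \eqref{1.1}, I would simply specialize to $t=1-(1-r)/K$, so that $(1-t)/(1-r)=1/K$. The estimate then becomes $\hat{\omega}(1-(1-r)/K)\le C K^{-\alpha}\hat{\omega}(r)$. Choosing $K$ so large that $CK^{-\alpha}<1$ (for instance any $K>C^{1/\alpha}$) yields $\hat{\omega}(r)\ge C'\hat{\omega}(1-(1-r)/K)$ with $C':=K^{\alpha}/C>1$, which is exactly \eqref{1.1}. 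So this direction is essentially a single substitution.

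\textbf{Necessity ($\Rightarrow$).} Suppose $\omega\in\check{\mathcal{D}}$ with constants $C,K>1$. The strategy is the standard telescoping iteration: starting from $r_{0}:=r$ and setting $r_{k+1}:=1-(1-r_{k})/K$, one has $1-r_{k}=(1-r)/K^{k}$, and the defining inequality, applied repeatedly, gives $\hat{\omega}(r_{k})\le C^{-k}\hat{\omega}(r)$. Now given $t\in[r,1)$, I would choose the unique $k\ge 0$ with $r_{k}\le t<r_{k+1}$, which is equivalent to $(1-r)/K^{k+1}<1-t\le (1-r)/K^{k}$. Since $\hat{\omega}$ is nonincreasing, $\hat{\omega}(t)\le\hat{\omega}(r_{k})\le C^{-k}\hat{\omega}(r)$, and the two-sided bound on $1-t$ translates into $C^{-k}\le C\bigl((1-t)/(1-r)\bigr)^{\log C/\log K}$. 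Setting $\alpha:=\log C/\log K>0$ yields the desired power estimate.

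\textbf{Main obstacle.} There is no real obstacle; both directions are quantitatively explicit, and the only technical point is keeping the exponents and constants consistent through the iteration, in particular deriving $\alpha=\log C/\log K$ and verifying that the boundary cases ($t=r$, or $t$ very close to $1$) are covered by the chosen integer $k$. Note that this lemma is the exact $\check{\mathcal{D}}$-analogue of Lemma~\ref{hat-d-eq-cd}(iii) for $\hat{\mathcal{D}}$, and the arguments mirror each other, with the role of the factor $(1+r)/2$ replaced by $1-(1-r)/K$ and the direction of the inequality reversed.
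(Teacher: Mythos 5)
Your proof is correct: both implications check out, including the existence of the index $k$ in the telescoping step and the identification $\alpha=\log C/\log K$. The paper gives no argument of its own here --- it only says to adapt \cite[Lemma 2.1]{Pja-2016} ``with minor adjustments'' --- and the iteration you write out is exactly the standard argument behind that reference, so you are supplying the omitted details rather than taking a different route.
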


\begin{proof}
Adapt the proof of \cite[Lemma 2.1]{Pja-2016} with minor adjustments.
\end{proof}

\begin{lemma}(\cite[Lemma 4]{2020-hankel-JA})\label{hankel-bb-lem4}
Let \(\omega, \nu \in \mathcal{D}\), and denote $\sigma = \sigma_{\omega,\nu} = \omega \hat{\nu}/\hat{\omega}$.  
Then \(\hat{\sigma} \simeq \hat{\nu}\) on \([0,1)\), and consequently \(\sigma \in \mathcal{D}\).
\end{lemma}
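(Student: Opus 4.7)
The plan is to establish the integral equivalence $\hat{\sigma}\simeq \hat{\nu}$ directly; once this is done, both doubling conditions transfer automatically from $\nu$ to $\sigma$, because $\hat{\mathcal{D}}$ and $\check{\mathcal{D}}$ are defined solely through the tail function and comparable tails produce the same inequalities up to constants, giving $\sigma\in\mathcal{D}$. Note that the naive bound $\int_r^1 \omega(t)/\hat{\omega}(t)\,dt=-\log\hat{\omega}|_r^1$ diverges, so monotonicity of $\hat{\nu}$ alone does not suffice; one must exploit the geometric decay of $\hat{\omega}$ guaranteed by $\omega\in\check{\mathcal{D}}$ via a shell decomposition adapted to the scale of $\omega$.

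For the upper bound, I fix $K=K_\omega$ from \eqref{1.1} and set $r_n=1-(1-r)/K^n$. On each shell $[r_n,r_{n+1}]$, the monotonicity of $\hat{\nu}$ and $\hat{\omega}$ yields
\[
\int_{r_n}^{r_{n+1}}\frac{\omega(t)\hat{\nu}(t)}{\hat{\omega}(t)}\,dt \;\le\; \frac{\hat{\nu}(r_n)}{\hat{\omega}(r_{n+1})}\bigl(\hat{\omega}(r_n)-\hat{\omega}(r_{n+1})\bigr)\;\le\;\hat{\nu}(r_n)\cdot\frac{\hat{\omega}(r_n)}{\hat{\omega}(r_{n+1})}.
\]
Lemma \ref{hat-d-eq-cd}(iii) applied to $\omega$ bounds the ratio $\hat{\omega}(r_n)/\hat{\omega}(r_{n+1})$ by $C_0K^{\beta}$, uniformly in $n$, so the shell integral is $\lesssim \hat{\nu}(r_n)$. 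Lemma \ref{check-d-eq-cd} applied to $\nu$ then gives the geometric decay $\hat{\nu}(r_n)\lesssim K^{-n\alpha}\hat{\nu}(r)$, and summing the resulting geometric series produces $\hat{\sigma}(r)\lesssim \hat{\nu}(r)$.

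For the lower bound I keep only the first shell $[r,r_1]$ with $r_1=1-(1-r)/K_\omega$; the reversed monotonicity estimates give
\[
\hat{\sigma}(r)\;\ge\;\hat{\nu}(r_1)\cdot\frac{\hat{\omega}(r)-\hat{\omega}(r_1)}{\hat{\omega}(r)}.
\]
The defining inequality \eqref{1.1} of $\check{\mathcal{D}}$ for $\omega$ forces the second factor to be bounded below by $1-1/C>0$, while Lemma \ref{hat-d-eq-cd}(iii) applied to $\nu\in\hat{\mathcal{D}}$ gives $\hat{\nu}(r_1)\gtrsim \hat{\nu}(r)$. Combining the two estimates finishes $\hat{\sigma}\simeq \hat{\nu}$, after which substituting this equivalence into the defining inequalities of $\hat{\mathcal{D}}$ and $\check{\mathcal{D}}$ for $\nu$ immediately delivers $\sigma\in\mathcal{D}$.

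The only real obstacle is organizational: one must arrange that the same scale $K=K_\omega$ used to produce geometric decay of $\hat{\omega}$ on the shells also interacts favorably with the polynomial decay of $\hat{\nu}$ from Lemma \ref{check-d-eq-cd}, so that the shell-by-shell bounds sum. Once the scale is fixed this is automatic, and the rest is a book-keeping exercise.
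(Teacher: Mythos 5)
Your argument is correct, and it checks out line by line: the shell bound $\int_{r_n}^{r_{n+1}}\omega\hat{\nu}/\hat{\omega}\,dt\le \hat{\nu}(r_n)\hat{\omega}(r_n)/\hat{\omega}(r_{n+1})$ uses only monotonicity of the two tail functions; Lemma \ref{hat-d-eq-cd}(iii) for $\omega$ controls the ratio by $CK^{\beta}$ since $(1-r_n)/(1-r_{n+1})=K$; Lemma \ref{check-d-eq-cd} for $\nu$ gives the summable factor $K^{-n\alpha}$; and the lower bound needs only the first shell together with \eqref{1.1} for $\omega$ and $\hat{\mathcal{D}}$ for $\nu$. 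Note, however, that the paper does not prove this lemma at all --- it is quoted verbatim from \cite[Lemma 4]{2020-hankel-JA} --- so there is no internal proof to compare against; your write-up is a self-contained verification in the standard style of that reference (a geometric shell decomposition at the scale where $\hat{\omega}$ decays by a fixed factor). Two cosmetic remarks: $K_\omega$ is defined as an \emph{infimum} of admissible constants and need not itself satisfy \eqref{1.1}, so you should fix some admissible $K>K_\omega$ rather than $K=K_\omega$ (nothing in the argument changes); and the final transfer of the doubling conditions from $\hat{\nu}$ to $\hat{\sigma}$ is indeed automatic because both $\hat{\mathcal{D}}$ and $\check{\mathcal{D}}$ are formulated purely in terms of the tail function and are stable under two-sided comparability, as you say.
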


\begin{lemma}\label{intergral-eq}
Let \(0 < p < \infty\), \(\omega \in \mathcal{D}\), and \(-\alpha < \kappa < \infty\), where \(\alpha = \alpha(\omega) > 0\) is as in Lemma \ref{check-d-eq-cd}.
Then
\begin{align}\label{intergral-equation}
\int_{\mathbb{B}} |f(z)|^p (1 - |z|)^\kappa \omega(z) \, dV(z) \simeq \int_{\mathbb{B}} |f(z)|^p (1 - |z|)^{\kappa-1} \hat{\omega}(z) \, dV(z), \quad f \in \mathcal{H}(\mathbb{B}).
\end{align}
\end{lemma}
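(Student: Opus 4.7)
The strategy is to pass to polar coordinates and reduce the equivalence to a one-dimensional pointwise estimate on tails. Setting $G(r) := r^{2n-1} M_p^p(r,f)$, the subharmonicity of $|f|^p$ makes $G$ non-decreasing on $[0,1)$ with $G(0) = 0$. Writing $w_\mu(r) := (1-r)^\kappa \omega(r)$ and $w_\nu(r) := (1-r)^{\kappa-1}\hat{\omega}(r)$, with tails $\hat{w}_\star(s) := \int_s^1 w_\star(t)\,dt$, the two sides of \eqref{intergral-equation} take the form $2n\int_0^1 G(r) w_\star(r)\,dr$ for $\star \in \{\mu,\nu\}$. Using $G(r) = \int_0^r dG(s)$ and Fubini,
\[
\int_0^1 G(r) w_\star(r)\,dr \;=\; \int_0^1 \hat{w}_\star(s)\,dG(s).
\]
Since $dG \geq 0$, it will suffice to prove the pointwise equivalence $\hat{w}_\mu(r) \simeq \hat{w}_\nu(r)$ on $[0,1)$.

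The heart of the argument is the claim $\hat{w}_\mu(r) \simeq \hat{w}_\nu(r) \simeq (1-r)^\kappa \hat{\omega}(r)$. For the upper bound I will decompose $[r,1) = \bigsqcup_{k\geq 0}[r_k, r_{k+1})$ with $r_k := 1 - 2^{-k}(1-r)$; on each slab $(1-t) \simeq 2^{-k}(1-r)$, and by Lemma \ref{check-d-eq-cd} $\hat{\omega}(r_k) \lesssim 2^{-k\alpha}\hat{\omega}(r)$. Routine estimation then yields $\int_{r_k}^{r_{k+1}} w_\star(t)\,dt \lesssim 2^{-k(\kappa+\alpha)}(1-r)^\kappa \hat{\omega}(r)$ for both choices of $\star$, and summing the geometric series (which converges precisely because $\kappa+\alpha > 0$) gives $\hat{w}_\star(r) \lesssim (1-r)^\kappa \hat{\omega}(r)$. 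For the lower bound, fix $K > 1$ from the $\check{\mathcal{D}}$ condition: then $\hat{\omega}(r) - \hat{\omega}(1-(1-r)/K) \gtrsim \hat{\omega}(r)$ from $\check{\mathcal{D}}$, and $\hat{\omega}(1-(1-r)/K) \gtrsim \hat{\omega}(r)$ from Lemma \ref{hat-d-eq-cd}(iii). Restricting each tail integral to $[r,1-(1-r)/K]$, where $(1-t) \simeq (1-r)$, then produces the matching lower bound (handling $\kappa$ of either sign uniformly, since the $(1-t)^\kappa$ or $(1-t)^{\kappa-1}$ factors are constant up to a factor depending only on $\kappa, K$).

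The main obstacle is balancing the two doubling conditions against the possibly negative exponent $\kappa$: the hypothesis $\kappa > -\alpha$ is used exactly at one point, namely to force convergence of the dyadic geometric series in the upper bound, and the argument really needs the sharp polynomial decay of $\hat{\omega}$ furnished by Lemma \ref{check-d-eq-cd}. Once the pointwise tail equivalence is in hand, the Stieltjes identity above immediately closes the proof.
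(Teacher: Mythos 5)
Your proposal is correct, and its skeleton coincides with the paper's: pass to polar coordinates, use a Fubini/Stieltjes (resp.\ integration-by-parts) identity with the non-decreasing function $r^{2n-1}M_p^p(r,f)$ to reduce the statement to the pointwise tail equivalence $\int_r^1(1-t)^\kappa\omega(t)\,dt\simeq\int_r^1(1-t)^{\kappa-1}\hat\omega(t)\,dt$, and then show both tails are comparable to $(1-r)^\kappa\hat\omega(r)$. Where you genuinely diverge is in how the four one-sided estimates are obtained. For the upper bounds you use a single dyadic decomposition $r_k=1-2^{-k}(1-r)$ together with the polynomial decay $\hat\omega(r_k)\lesssim 2^{-k\alpha}\hat\omega(r)$ from Lemma~\ref{check-d-eq-cd}, which treats the $\omega$-tail and the $\hat\omega$-tail uniformly and makes transparent that $\kappa+\alpha>0$ is exactly the convergence condition for the geometric series; the paper instead handles the $\omega$-tail by integrating by parts and invoking Lemma~\ref{hat-d-eq-cd}(iii), and the $\hat\omega$-tail directly via Lemma~\ref{check-d-eq-cd}. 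For the lower bounds you restrict to the single annulus $[r,\,1-(1-r)/K]$ and use the raw $\check{\mathcal D}$ inequality to get $\hat\omega(r)-\hat\omega(1-(1-r)/K)\gtrsim\hat\omega(r)$, which lets you avoid the auxiliary weight $\sigma=\omega\hat\nu/\hat\omega$ of Lemma~\ref{hankel-bb-lem4} that the paper relies on for the $\omega$-tail. Your version is somewhat more self-contained and symmetric in the two tails; the paper's is shorter on the page because it outsources work to the quoted lemmas. One small point worth making explicit if you write this up: the monotonicity of $G$ (hence $dG\ge 0$) is what allows the pointwise tail equivalence to be integrated against $dG$, and your Tonelli formulation handles the boundary behaviour at $r=1$ with no extra argument, which is a minor advantage over the integration-by-parts route.
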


\begin{proof}
The function $(1 - |\cdot|)^{\kappa-1} \hat{\omega}$ is a weight for each $\kappa>-\alpha$ by Lemma \ref{check-d-eq-cd}.
For the left-hand side of \eqref{intergral-equation}, expressing the integral in polar coordinates and applying integration by parts shows that 
\begin{align*}
\int_{\mathbb{B}} |f(z)|^p (1 - |z|)^\kappa \omega(z) \, dV(z)
&\simeq \int_{0}^{1} r^{2n-1} dr\int_{\mathbb{S}} \left|f(r\zeta)\right|^{p}(1-r)^{\kappa}\omega(r) \,d\sigma(\zeta)   \\
&=\int_{0}^{1} r^{2n-1}   (1-r)^{\kappa}\omega(r) M_{p}^{p}(r,f)\, dr  \\
&=-\int_{0}^{1} r^{2n-1} M_{p}^{p}(r,f) \, d\left( \int_{r}^{1} (1-t)^{\kappa} \omega(t) dt\right) \\
&=\int_0^1 \frac{\partial\left ( r^{2n-1} M_P^p(r,f) \right )}{\partial r}    \left( \int_r^1 (1 - t)^\kappa \omega(t) \, dt \right) dr, 
\end{align*}
where $M_{p}(r,f)=\left ( \int_{\mathbb{S}} \left|f(r\zeta)\right|^{p} d\sigma(\zeta) \right )^{\frac{1}{p} }$ and $d\sigma$ is the normalized area measure on $\mathbb{S}$.
Applying the analogous treatment to the right-hand side of \eqref{intergral-equation} yields
\begin{align*}
\int_{\mathbb{B}} |f(z)|^p (1 - |z|)^{\kappa-1} \hat{\omega}(z) \, dV(z)=\int_0^1 \frac{\partial\left ( r^{2n-1} M_P^p(r,f) \right )}{\partial r}    \left( \int_r^1 (1 - t)^{\kappa-1} \hat{\omega}(t) \, dt \right) dr.
\end{align*}
Now, we just need to prove
\begin{align*}
\int_0^1 \frac{\partial\left ( r^{2n-1} M_P^p(r,f) \right )}{\partial r}    \left( \int_r^1 (1 - t)^\kappa \omega(t) \, dt \right) dr 
\simeq \int_0^1 \frac{\partial\left ( r^{2n-1} M_P^p(r,f) \right )}{\partial r} \left( \int_r^1 (1 - t)^{\kappa - 1} \hat{\omega}(t) \, dt \right) dr,
\end{align*}
which can be simplifed to show that
\begin{align}
\int_r^1 (1 - t)^\kappa \omega(t) \, dt\simeq \int_r^1 (1 - t)^{\kappa - 1} \hat{\omega}(t) \, dt.
\end{align}
From one hand, since $\omega\in\hat{\mathcal{D}}$, according to integration by parts and Lemma \ref{hat-d-eq-cd}, we get
\begin{align*}
\int_r^1 (1 - t)^\kappa \omega(t) \, dt
&=-\int_r^1 (1 - t)^\kappa \, d\hat{\omega}(t) \\
&=(1-r)^\kappa \hat{\omega}(r)-\kappa \int_r^1 (1 - t)^{\kappa-1} \hat{\omega}(t)dt\\
&\lesssim (1-r)^\kappa \hat{\omega}(r)-\frac{\kappa\hat{\omega}(r)}{(1-r)^\beta } \int_r^1 (1 - t)^{\kappa+\beta-1} dt \\
&=(1-r)^\kappa \hat{\omega}(r)-\frac{\kappa\hat{\omega}(r)}{(1-r)^\beta } \frac{(1-r)^{\kappa+\beta}}{\kappa+\beta} \\
&\simeq (1-r)^\kappa \hat{\omega}(r), 
\end{align*}
and
\begin{align*}
\int_r^1 (1 - t)^{\kappa-1} \hat{\omega} (t) \, dt 
&\gtrsim \frac{\hat{\omega} (r)}{(1-r)^{\beta}}\int_r^1 (1 - t)^{\kappa+\beta-1}  \, dt \\
&=\frac{\hat{\omega} (r)}{(1 - r)^{\beta}} \frac{(1 - r)^{\kappa+\beta }}{\kappa+\beta}  \\
&\simeq (1 - r)^{\kappa} \hat{\omega} (r).
\end{align*}
Combining Lemma \ref{hat-d-eq-cd} with Lemma \ref{hankel-bb-lem4}, we obtain
\begin{align*}
	\int_r^1 (1 - t)^\kappa \omega(t) \, dt
	&\gtrsim\frac{\hat{\omega} (r)}{(1-r)^{\beta}}\int_r^1 \frac{(1 - t)^{\kappa+\beta}\omega(t)}{\hat{\omega}(t)}  dt \\
	&\simeq \frac{\hat{\omega} (r)}{(1-r)^{\beta}}(1 - r)^{\kappa+\beta} \\
	&=(1 - r)^{\kappa} \hat{\omega} (r).
\end{align*}
From the other hand, since $\omega\in\check{\mathcal{D}}$, Lemma \ref{check-d-eq-cd} implies that
\begin{align*}
\int_r^1 (1 - t)^{\kappa-1} \hat{\omega} (t) \, dt 
&\lesssim \frac{\hat{\omega} (r)}{(1-r)^{\alpha}}\int_r^1 (1 - t)^{\kappa+\alpha-1}  \, dt \\
&=\frac{\hat{\omega} (r)}{(1-r)^{\alpha}}\frac{(1 - r)^{\kappa+\alpha}}{\kappa+\alpha} 
\\
&=(1 - r)^{\kappa} \hat{\omega} (r).
\end{align*}
The desired conclusion is therefore immediately obtained.
\end{proof}

\begin{Proposition}\label{lem:schur}  
Let \(0 < q < \infty\), $\frac{1}{q}+\frac{1}{q'}=1$ and \(\omega \in \mathcal{D}\). Take $h=\hat{\omega}^{-\frac{1}{qq'}}$, then
\begin{align}\label{schur-2-q}
	\int_{\mathbb{B}} |B_{z}^{\omega}(\xi)| h(z)^{q} \omega(z) \, dV(z) \lesssim h(\xi)^{q}, \quad \xi \in \mathbb{B}
\end{align}
and
\begin{align}\label{schur-1-q'}
	\int_{\mathbb{B}} |B_{z}^{\omega}(\xi)| h(\xi)^{q'} \omega(\xi) \, dV(\xi) \lesssim h(z)^{q'}, \quad z \in \mathbb{B}.
\end{align}
\end{Proposition}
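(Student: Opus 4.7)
Because $B_z^\omega$ is the reproducing kernel of the Hilbert space $A_\omega^2$, it satisfies $B_z^\omega(\xi)=\overline{B_\xi^\omega(z)}$, and in particular $|B_z^\omega(\xi)|=|B_\xi^\omega(z)|$. Writing out $h^q=\hat{\omega}^{-1/q'}$ and $h^{q'}=\hat{\omega}^{-1/q}$ one sees that \eqref{schur-2-q} and \eqref{schur-1-q'} are two instances (the latter after relabeling the integration variable and using the kernel symmetry) of the single \emph{master estimate}
\[
I_p(\xi):=\int_{\BB}|B_\xi^\omega(z)|\,\hat{\omega}(z)^{-1/p}\,\omega(z)\,dV(z)\ \lesssim\ \hat{\omega}(\xi)^{-1/p},\qquad \xi\in\BB,
\]
taken with $p=q'$ and $p=q$ respectively. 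The plan is therefore to prove this master estimate for every $1<p<\infty$.

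\textbf{Reduction to a one-variable integral.} Since $\hat{\omega}^{-1/p}\omega$ is radial, polar coordinates express $I_p(\xi)$ as a constant multiple of $\int_0^1 r^{2n-1}\hat{\omega}(r)^{-1/p}\omega(r)M_1(r,B_\xi^\omega)\,dr$. For $|\xi|\le 1/2$, Lemma \ref{TO-lem2-3}(i) and Lemma \ref{lem:2.2}(ii) make $B_\xi^\omega$ uniformly bounded on $\BB$ while $\hat{\omega}(\xi)^{-1/p}$ is bounded below, so the estimate is trivial; I concentrate on $|\xi|\ge 1/2$. On the range $r|\xi|>1/4$, Lemma \ref{DjLsLxSy2019-lem4} (with $p=1$) gives $M_1(r,B_\xi^\omega)\simeq\int_0^{r|\xi|}\frac{dt}{\hat{\omega}(t)(1-t)}$. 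Swapping the order of integration by Fubini leaves an inner $r$-integral $\int_{s(t)}^1\hat{\omega}(r)^{-1/p}\omega(r)r^{2n-1}\,dr$ with $s(t)=\max\{t/|\xi|,\,1/(4|\xi|)\}$, which the substitution $u=\hat{\omega}(r)$ evaluates explicitly to a constant times $\hat{\omega}(s(t))^{1/p'}$. The proof is thereby reduced to the single-variable inequality
\[
\int_0^{|\xi|}\frac{\hat{\omega}(s(t))^{1/p'}}{\hat{\omega}(t)(1-t)}\,dt\ \lesssim\ \hat{\omega}(\xi)^{-1/p}.
\]

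\textbf{The single-variable inequality and main obstacle.} Splitting at $t=1/4$, the piece $t\in[0,1/4]$ is bounded by a constant (all factors stay away from $0$ and $1$), which for $|\xi|\ge 1/2$ is $\lesssim \hat{\omega}(\xi)^{-1/p}$. The delicate piece is $t\in[1/4,|\xi|]$, where the integrand contains the off-diagonal factor $\hat{\omega}(t/|\xi|)$ that is \emph{a priori} not comparable to $\hat{\omega}(t)$; this is exactly the point at which the lower-doubling condition $\check{\mathcal{D}}$ is essential. Using $(1-t/|\xi|)/(1-t)\le 1/|\xi|\le 2$, a first application of Lemma \ref{check-d-eq-cd} gives $\hat{\omega}(t/|\xi|)\simeq\hat{\omega}(t)$, reducing the integrand to $\hat{\omega}(t)^{-1/p}/(1-t)$. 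A second application of Lemma \ref{check-d-eq-cd} (with $r=t$, $s=|\xi|$) produces the pointwise bound $\hat{\omega}(t)^{-1/p}\le C((1-|\xi|)/(1-t))^{\alpha/p}\hat{\omega}(\xi)^{-1/p}$, after which the routine computation $\int_{1/4}^{|\xi|}(1-t)^{-1-\alpha/p}\,dt\lesssim (1-|\xi|)^{-\alpha/p}$ cancels the extra factor precisely and yields the desired $\hat{\omega}(\xi)^{-1/p}$ bound. The crux is that the polynomial exponent $\alpha>0$ supplied by $\check{\mathcal{D}}$ converts the borderline-divergent integral $\int dt/(1-t)$ into a convergent one of exactly the right size; without $\check{\mathcal{D}}$ there would be no such cancellation.
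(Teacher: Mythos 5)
Your argument is correct and follows essentially the same route as the paper's proof: polar coordinates together with Lemma~\ref{DjLsLxSy2019-lem4} reduce the matter to a one-variable integral, Fubini plus the exact evaluation $\int_t^1\hat{\omega}(s)^{-1/p}\omega(s)\,ds=p'\,\hat{\omega}(t)^{1/p'}$ isolate the kernel, and the $\check{\mathcal{D}}$ exponent from Lemma~\ref{check-d-eq-cd} yields $\int_0^{r}\hat{\omega}(t)^{-1/p}(1-t)^{-1}\,dt\lesssim\hat{\omega}(r)^{-1/p}$, which is precisely the paper's estimate \eqref{schur-pre}; your ``master estimate'' in the parameter $p$ is just the paper's appeal to symmetry made explicit. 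The one blemish is the claimed two-sided comparison $\hat{\omega}(t/|\xi|)\simeq\hat{\omega}(t)$, which fails in the direction $\hat{\omega}(t)\lesssim\hat{\omega}(t/|\xi|)$ when $t$ is close to $|\xi|$; fortunately only the direction $\hat{\omega}(s(t))\lesssim\hat{\omega}(t)$ enters your bound, and that already follows from the monotonicity of $\hat{\omega}$ since $s(t)\geq t$.
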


\begin{proof}
Take $h=\hat{\omega}^{-\frac{1}{qq'}}$, then 
\begin{align*}
\int_{t}^{1} h(s)^{q'} \omega(s) \, ds
&=\int_{t}^{1}\hat{\omega}(s)^{-\frac{1}{q}}\omega(s)\,ds  \\
&=-\int_{t}^{1}\hat{\omega}(s)^{-\frac{1}{q}}\,d\hat{\omega}(s) \\
&= \hat{\omega}(t)^{\frac{1}{q'}}
\end{align*}
for $0\leq t<1$. Therefore, the fact $\omega\in\check{\mathcal{D}}$ and Lemma \ref{check-d-eq-cd} yields 
\begin{equation}\label{schur-pre}
	\int_{0}^{r} \frac{\int_{t}^{1} h(s)^{q'} \omega(s) \, ds}{\hat{\omega}(t)(1 - t)} \, dt 
	\simeq \int_{0}^{r} \frac{dt}{\hat{\omega}(t)^{\frac{1}{q}} (1 - t)} 
	\lesssim \frac{1}{\hat{\omega}(r)^{\frac{1}{q}}} = h(r)^{q'},
	\quad 0 \leq r < 1.
\end{equation}
On the other hand, owing to \(\omega \in \hat{\mathcal{D}} \), we may Lemma \ref{DjLsLxSy2019-lem4} and asymptotic inequality \eqref{schur-pre} to deduce
\begin{align*}\label{schur-1-q'}
	\begin{aligned}	
		\int_{\mathbb{B}} |B_{z}^{\omega}(\xi)| h(\xi)^{q'} \omega(\xi) \, dV(\xi) 
		&\simeq \int_{0}^{1}s^{2n-1}ds\int_{\mathbb{S}}|B_{z}^{\omega}(s\eta)| h(s)^{q'} \omega(s)\,d\sigma(\eta) \\
		&\simeq \int_{0}^{1}s^{2n-1}h(s)^{q'} \omega(s) M_{1}^{1}\left(s,B_{z}^{\omega}\right) \, ds \\
		&\simeq \int_{0}^{1}s^{2n-1}h(s)^{q'} \omega(s) \,ds \int_{0}^{s|z|}\frac{1}{(1-t)\hat{\omega(t)}}\,dt  \\
		&\leq \int_{0}^{|z|} \frac{\int_{t}^{1} h(s)^{q'} \omega(s) \, ds}{\hat{\omega}(t)(1 - t)} \, dt \\
		&\lesssim h(z)^{q'}, \quad z \in \mathbb{B}.
	\end{aligned}
\end{align*}
As symmetry, a reasoning similar to that above yields \eqref{schur-2-q}.
\end{proof}

Auxiliary results will be needed to deduce our main result. 
The following theorem, proven in \cite{Djun2025TO}, is essential for our analysis.

\vspace{\baselineskip}

\noindent{\bf Theorem A. }{\it
Let $1 < p$, $q < \infty$, \(\omega \in \mathcal{D}\), and \(\mu\) be a positive Borel measure on \(\mathbb{B}\). 
The following assertions hold.
\begin{itemize}
\item[(I)]If \(1 < p \leq q < \infty\), then:
\begin{itemize}
\item[(i)]\(\mathcal{T}_\mu : A_\omega^p \to A_\omega^q\) is bounded if and only if 
\begin{align*}
\|T_\mu\|\simeq  \sup_{z \in \BB} \frac{\mu(S_{z})}{\omega(S_{z})^{1+\frac{1}{p}-\frac{1}{q}}} < \infty,
\end{align*}
which is equivalent to \(\mu\) being a \(\left(\frac{s}{p} - \frac{s}{q} + s\right)\)-Carleson measure for \(A_\omega^s\) for some (or equivalently, for all) \(0 < s < \infty\).
\item[(ii)]\(\mathcal{T}_\mu : A_\omega^p \to A_\omega^q\) is compact if and only if \begin{align*}
\lim_{|z|\to 1}\dfrac{\mu(S_z)}{\omega(S_z)^{1+\frac{1}{p}-\frac{1}{q}}}=0.
\end{align*}
\end{itemize}
\item[(II)]If \(1<q<p <\infty\), then \(\mathcal{T}_\mu : A_\omega^p \to A_\omega^q\) is bounded if and only if it is compact.
\end{itemize}}	
	
\vspace{\baselineskip}

A distinctive feature of the proof in \cite[Theorem 1.3]{Djun2025TO} is its reliance on Carleson squares, diverging from the traditional use of the (pseudo-)hyperbolic ball techniques in weighted Bergman space analysis. For any weight \(\omega \in \mathcal{R}\) and parameter \(\beta \in (0,1)\), there exists a constant \(C = C(\beta, \omega) > 0\) such that
\[
C^{-1} \omega\left(D\left(a, \beta(1 - |a|)\right)\right) \leq \omega\left(S_a\right) \leq C \, \omega\left(D\left(a, \beta(1 - |a|)\right)\right),\quad \forall \, a\in\BB.
\]
Hence, a \(\left(\frac{s}{p} - \frac{s}{q} + s\right)\)-Carleson measure for \(A_\omega^s\) can be equivalently characterized either Carleson blocks or (pseudo-)hyperbolic balls. Therefore, for \(\omega \in \mathcal{R}\), Theorem A (I) can be restated as follows:

\vspace{\baselineskip}

\noindent{\bf Theorem A (I) (Restated). }{\it
If \(1 < p \leq q < \infty\), \(\omega \in \mathcal{R}\) and \(\mu\) be a positive Borel measure on \(\mathbb{B}\). 
The following assertions are equivalent.
\begin{itemize}
\item[(i)]\(\mathcal{T}_\mu : A_\omega^p \to A_\omega^q\) is bounded.
\item[(ii)]$\|T_\mu\|\simeq  \sup_{z \in \BB} \frac{\mu(D(z,r))}{\omega(D(z,r))^{\frac{1}{p}-\frac{1}{q}+1}} < \infty.$
\item[(iii)]\(\mu\) is a \(\left(\frac{s}{p} - \frac{s}{q} + s\right)\)-Carleson measure for \(A_\omega^s\) for some (or equivalently, for all) \(0 < s < \infty\).
\end{itemize}}

\vspace{\baselineskip}

Furthermore, if \(\mathcal{T}_\mu : A_\omega^p \to A_\omega^q\) is bounded, then the equivalence
\begin{align}\label{eq-sup}
\sup_{z\in\mathbb{B}}\frac{\mu(S_z)}{\omega(S_z)^{\frac{1}{p}-\frac{1}{q}+1}}\simeq\sup_{z\in\mathbb{B}}\frac{\mu(D(z,r))}{\omega(D(z,r))^{\frac{1}{p}-\frac{1}{q}+1}}
\end{align}
holds for \(1 < p \leq q < \infty\) and \(\omega \in \mathcal{R}\).

Recall that the essential norm of a bounded linear operator $T:X \to Y$ is the distance from $T$ to the space of compact operators. Namely,
\begin{align*}
	\|T\|_{e} = \inf\left\{\|T - K\| : K \text{ is compact from } X \text{ to } Y\right\}.
\end{align*}
The essential norm of a linear operator \( T \), is intrinsically linked to its boundedness and compactness. Specifically, we know that \( \|T\|_e < +\infty \) if and only if \( T \) is bounded, and \( \|T\|_e = 0 \) if and only if \( T \) is compact. Consequently, estimating \( \|T\|_e \) provides a criteria for determining whether \( T \) is bounded or compact. The study of essential norms for operators on holomorphic function spaces is of significant importance in complex analysis and operator theory, see \cite{the-Essential-1987,weighted2004,Essential2007,the-Essential2015,Yang-2024} and the references therein for related works.

\section{Main results and proofs}

A careful inspection of the Theorem A (II)  in hand states that $T_{\mu}: A_{\omega}^{p} \to A_{\omega}^{q}$ is bounded if and only if it is compact whenever $1 < q < p < \infty$, and hence its essential norm is either \(0\) or \(+\infty\). Consequently, it suffices to consider the case $1<p\le q<\infty$.

With above preparations we can state the main result of this paper.
\begin{Theorem}\label{main-thm}
Let $1<p\le q<\infty$, $\om\in\mathcal{D}$ and $\mu$ be a positive Borel measure on $\BB$. If $\mathcal{T}_\mu:A_{\omega}^{p}\to A_{\omega}^{q}$ is bounded, then
\begin{align*}
\left\|\mathcal{T}_\mu\right\|_{e}\simeq\limsup_{|z|\to1}\frac{\mu(S_z)}{\omega(S_z)^{1+\frac{1}{p}-\frac{1}{q}}}.    
\end{align*}
\end{Theorem}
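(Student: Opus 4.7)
I would split the estimate into matching upper and lower bounds. Throughout, write $c := 1+\tfrac{1}{p}-\tfrac{1}{q}\ge 1$ and $L := \limsup_{|z|\to 1}\mu(S_z)/\omega(S_z)^c$. The plan is to dominate $\|\mathcal{T}_\mu\|_e$ from above by truncating the symbol measure and invoking Theorem A(I)(i), and from below by testing against normalized reproducing kernels.

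For the upper bound, set $\mu_r := \mu|_{\{|\xi|\le r\}}$. Since $\mu_r$ has compact support and $\|B_\xi^\omega\|_{H^\infty}\lesssim 1/\omega(S_\xi)$ is uniformly bounded for $\xi$ in compact subsets (Lemma \ref{TO-lem2-3}(i)), the criterion of Lemma \ref{lem:2.4} applies and $\mathcal{T}_{\mu_r}$ is compact from $A_\omega^p$ to $A_\omega^q$. Therefore $\|\mathcal{T}_\mu\|_e \le \|\mathcal{T}_{\mu-\mu_r}\|$, and by Theorem A(I)(i) applied to $\mu-\mu_r$ (supported in $\{|\xi|>r\}$) this is $\simeq \sup_{z\in\mathbb{B}} (\mu-\mu_r)(S_z)/\omega(S_z)^c$. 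For $|z|>r$ we have $(\mu-\mu_r)(S_z)=\mu(S_z)$, so the sup over this range tends to $L$ as $r\to 1$. For $|z|\le r$ I would cover the annular region $S_z\cap\{|\xi|>r\}$ by a finite family of Carleson blocks $\{S_{w_j}\}$ with $|w_j|=r$ whose projections $w_j/|w_j|$ form a maximal $\sqrt{1-r}$-separated net in $Q_z$; the cardinality is $\simeq (1-|z|)^n/(1-r)^n$, each $\omega(S_{w_j})\simeq(1-r)^n\hat\omega(r)$ by Lemma \ref{lem:2.2}(ii), and the $\hat\omega$-doubling estimate of Lemma \ref{hat-d-eq-cd}(iii) handles the quotient $\hat\omega(r)/\hat\omega(|z|)$. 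Choosing $r$ large enough that $\mu(S_w)/\omega(S_w)^c<L+\varepsilon$ whenever $|w|\ge r$ and summing yields $(\mu-\mu_r)(S_z)/\omega(S_z)^c \lesssim L+\varepsilon$ uniformly in $|z|\le r$; letting $\varepsilon\downarrow 0$ gives $\|\mathcal{T}_\mu\|_e\lesssim L$.

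For the lower bound, fix $\varepsilon>0$ and a sequence $\{a_k\}$ with $|a_k|\to 1$ and $\mu(S_{a_k})/\omega(S_{a_k})^c>L-\varepsilon$. With $\delta\in(0,1)$ as in Lemma \ref{TO-lem2-3}(iii), I would shift each $a_k$ toward the origin along its ray to a point $\tilde a_k$ satisfying $1-|\tilde a_k|=(1-|a_k|)/\delta$, so that $(\tilde a_k)_\delta = a_k$ and $\omega(S_{\tilde a_k})\simeq\omega(S_{a_k})$ by Lemma \ref{lem:2.2}(iii). Set
$$f_k := B_{\tilde a_k}^\omega/\|B_{\tilde a_k}^\omega\|_{A_\omega^p}, \qquad g_k := B_{\tilde a_k}^\omega/\|B_{\tilde a_k}^\omega\|_{A_\omega^{q'}}.$$
Because $|B_w^\omega(\xi)|=|B_\xi^\omega(w)|\lesssim 1/\omega(S_\xi)$ is bounded for $\xi$ in compact subsets while $\|B_{\tilde a_k}^\omega\|_{A_\omega^p}\simeq\omega(S_{\tilde a_k})^{-(1-1/p)}\to\infty$, $f_k\to 0$ uniformly on compacts; reflexivity of $A_\omega^p$ and continuity of point evaluations then force $f_k\rightharpoonup 0$ weakly, whence $\|Kf_k\|_{A_\omega^q}\to 0$ for every compact $K:A_\omega^p\to A_\omega^q$, giving $\|\mathcal{T}_\mu\|_e\ge\limsup_k\|\mathcal{T}_\mu f_k\|_{A_\omega^q}$. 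Using the duality $(A_\omega^q)^*\cong A_\omega^{q'}$ via the $A_\omega^2$-pairing (available for $\omega\in\mathcal{D}$ through boundedness of the Bergman projection) and a Fubini computation with the reproducing property, I would obtain
$$\|\mathcal{T}_\mu f_k\|_{A_\omega^q}\gtrsim\langle\mathcal{T}_\mu f_k,g_k\rangle_{A_\omega^2}=\frac{\int_{\mathbb{B}}|B_{\tilde a_k}^\omega(\xi)|^2\,d\mu(\xi)}{\|B_{\tilde a_k}^\omega\|_{A_\omega^p}\|B_{\tilde a_k}^\omega\|_{A_\omega^{q'}}}.$$
Lemma \ref{TO-lem2-3}(ii) identifies the denominator with $\omega(S_{\tilde a_k})^{c-2}$ up to constants, and Lemma \ref{TO-lem2-3}(iii) gives $|B_{\tilde a_k}^\omega(\xi)|^2\gtrsim\omega(S_{\tilde a_k})^{-2}$ on $S_{(\tilde a_k)_\delta}=S_{a_k}$; multiplying out collapses the exponents to produce $\|\mathcal{T}_\mu f_k\|_{A_\omega^q}\gtrsim\mu(S_{a_k})/\omega(S_{a_k})^c>L-\varepsilon$, whence $\|\mathcal{T}_\mu\|_e\gtrsim L$.

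The main technical obstacle will be the Carleson-block covering step in the upper bound, where the combinatorial count $(1-|z|)^n/(1-r)^n$ must be absorbed against the geometric factor $((1-r)/(1-|z|))^{n(c-1)}$ and the ratio $(\hat\omega(r)/\hat\omega(|z|))^c$ using only the $\mathcal{D}$-hypothesis, yielding a bound uniform in both $z$ and $r$. A secondary concern is the justification of the $(A_\omega^q)^*\cong A_\omega^{q'}$ duality under the $A_\omega^2$-pairing for $\omega\in\mathcal{D}$, which is less transparent than in the regular-weight case; the auxiliary shift $a_k\mapsto\tilde a_k$ is the standard device needed to convert the pointwise lower bound on the kernel into an estimate against $\mu(S_{a_k})$ rather than $\mu(S_{(a_k)_\delta})$.
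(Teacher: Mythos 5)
Your argument is correct, and the two halves relate to the paper's proof differently. The lower bound is essentially the paper's argument: test functions built from reproducing kernels normalized in $A_\omega^p$ and $A_\omega^{q'}$, paired through $\langle\cdot,\cdot\rangle_{A_\omega^2}$ and collapsed by Fubini to $\int f_k\overline{g_k}\,d\mu$, then bounded below on a Carleson block via Lemma \ref{TO-lem2-3}(iii). The only differences are cosmetic: the paper uses the power $(B_a^\omega)^{1+\frac1p-\frac1q}$ while you use the plain kernel (the exponent $1+\frac1p-\frac1q$ still emerges correctly from $\|B_a^\omega\|_{A_\omega^p}\|B_a^\omega\|_{A_\omega^{q'}}\simeq\omega(S_a)^{-(1-\frac1p+\frac1q)}$ together with the factor $\omega(S_a)^{-2}$ from the kernel lower bound, so your bookkeeping checks out), and you pre-shift $a_k\mapsto\tilde a_k$ so that $(\tilde a_k)_\delta=a_k$ rather than relabeling $a_\delta$ at the end. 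Also note that your worry about the duality $(A_\omega^q)^*\cong A_\omega^{q'}$ is a non-issue: the inequality $|\langle \mathcal{T}_\mu f_k,g_k\rangle_{A_\omega^2}|\le\|\mathcal{T}_\mu f_k\|_{A_\omega^q}\|g_k\|_{A_\omega^{q'}}$ is just H\"older, which is all that is used (and all the paper uses). The upper bound is where you genuinely diverge. The paper decomposes $\mathcal{T}_\mu=T_1+T_2$ exactly as you do, but then estimates $\|T_2\|$ from scratch via the Schur-type test of Proposition \ref{lem:schur}, subharmonicity, and Minkowski's integral inequality, arriving at a sup of $\mu(D(u,r))/\omega(S_u)^{1+\frac1p-\frac1q}$ localized near the boundary. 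You instead apply Theorem A(I)(i) as a black box to the tail measure $\mu-\mu_r$ and reduce everything to showing $\sup_z(\mu-\mu_r)(S_z)/\omega(S_z)^{1+\frac1p-\frac1q}\to L$, handling $|z|\le r$ by covering $S_z\cap\{|\xi|>r\}$ with $\simeq(1-|z|)^n/(1-r)^n$ blocks $S_{w_j}$, $|w_j|=r$. This is shorter and entirely legitimate given that Theorem A supplies a two-sided norm estimate with constants independent of the measure; what the paper's longer route buys is self-containedness (it effectively reproves the relevant half of Theorem A in localized form). Your flagged "main technical obstacle" in the covering step is in fact trivial: since $c:=1+\frac1p-\frac1q\ge1$, one has $\bigl(\frac{1-r}{1-|z|}\bigr)^{n(c-1)}\le1$ and $\hat\omega(r)/\hat\omega(|z|)\le1$ by monotonicity of $\hat\omega$ alone, so no doubling estimate is even needed to absorb the cardinality against $\omega(S_{w_j})^c/\omega(S_z)^c$.
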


\begin{proof}[Proof of the lower estimate]
For fixed $a\in\BB$, we define functions $f_a(z)$ and $g_a(z)$ as following:
\begin{align*}
f_a(z)=\frac{B_a^\om(z)^{1+\frac{1}{p}-\frac{1}{q}}}{\left \| (B_a^\om)^{1+\frac{1}{p}-\frac{1}{q}} \right \|_{A_{\omega}^{p}}}
\mbox{\quad and \quad}
g_a(z)=\frac{B_a^\om(z)^{1+\frac{1}{p}-\frac{1}{q}}}{\left \| (B_a^\om)^{1+\frac{1}{p}-\frac{1}{q}} \right \|_{A_{\omega}^{q'}}}, 
\end{align*}
where $\frac{1}{q}+\frac{1}{q'}=1$. Note that $f_a\in A_{\omega}^{p}$ and $g_a\in A_{\omega}^{q'}$ with $\left\|f_a\right\|_{A_{\omega}^{p}}=\left\|g_a\right\|_{A_{\omega}^{q'}}=1$. 
Let $s=1+\frac{1}{p}-\frac{1}{q}$, then by Lemma \ref{TO-lem2-3} (ii),
\begin{align*}
\left\| (B_{a}^{\om})^{1+\frac{1}{p}-\frac{1}{q}} \right\|_{A_{\omega}^{p}}
= \left( \int_{\mathbb{B} } |B_{a}^{\om}(z)|^{\left ( 1+\frac{1}{p}-\frac{1}{q} \right )p } \omega(z) dV(z) \right)^{\frac{1}{p}}
=\left\| B_{a}^{\om} \right\|_{A_{\omega}^{sp}}^{s}
=\frac{1}{\omega(S_a)^{1-\frac{1}{q}}} . 
\end{align*}
Further, using Lemma \ref{TO-lem2-3} (i), for $|z|\le r<1$,
\begin{align*}
	|f_a(z)| \lesssim \omega(S_a)^{1-\frac{1}{q}}\|B^\omega_{ra}\|_{H^\infty}^{1+\frac{1}{p}-\frac{1}{q}}  \simeq \frac{\omega(S_a)^{1-\frac{1}{q}}}{\omega(S_{ra})^{1+\frac{1}{p}-\frac{1}{q}}}.
\end{align*}
This yields that $\left\{f_a\right\}$ is bounded in $A_{\omega}^{p}$ and converges to 0 uniformly on compact subsets of $\BB$ as $|a|\to1$. Let $K$ be an arbitrary compact operator from $A_{\omega}^{p}$ into $A_{\omega}^{q}$. Then $\left\|Kf_a\right\|_{A_{\omega}^{q}}\to 0$ as $|a|\to1$. By the Cauchy-Schwarz inequality,
\begin{align*}
\left|\int_{\mathbb{B} } Kf_{a}(z) \overline{g_{a}(z)} \omega(z) \,dV(z) \right|
= \left | \left \langle Kf_{a},g_{a} \right \rangle_{A_{\omega}^{2}}   \right | 
\leq \left \| Kf_{a} \right \| _{A_{\omega}^{q}} \left \| g_{a} \right \| _{A_{\omega}^{q'}}
\to 0 \quad \text{as }|a|\to1.
\end{align*}
Since $\mathcal{T}_\mu:A_{\omega}^{p}\to A_{\omega}^{q}$ is bounded, by the triangular inequality and H\"{o}lder's inequality, 
\begin{align*}
&\left|\int_{\mathbb{B} } \mathcal{T}_\mu f_{a}(z) \overline{g_{a}(z)} \omega(z) \,dV(z) \right|-\left|\int_{\mathbb{B} } Kf_{a}(z) \overline{g_{a}(z)} \omega(z) \,dV(z) \right| \\
\leq&\left|\int_{\mathbb{B} } (\mathcal{T}_\mu-K) f_{a}(z) \overline{g_{a}(z)} \omega(z) \,dV(z) \right|\\
\leq&\int_{\mathbb{B} } \left|(\mathcal{T}_\mu-K) f_{a}(z) \overline{g_{a}(z)} \omega(z)\right| dV(z) \\
\leq&\left( \int_{\mathbb{B} } \left|(\mathcal{T}_\mu-K) f_{a}(z)\right|^{q} \omega(z) \, dV(z) \right)^{\frac{1}{q} }  
\left( \int_{\mathbb{B} } \left|\overline{g_{a}(z)}\right|^{q'} \omega(z) \, dV(z) \right)^{\frac{1}{q'} }  \\
=&\left \| (\mathcal{T}_\mu-K) f_{a} \right \| _{A_{\omega}^{q}} \left \| g_{a} \right \| _{A_{\omega}^{q'}} \\
\leq& \left\| \mathcal{T}_\mu-K\right\|.
\end{align*}
Hence, 
\begin{align}\label{left-hand-TK}
\limsup_{|a|\to 1}\left|\int_{\mathbb{B} } \mathcal{T}_\mu f_{a}(z) \overline{g_{a}(z)} \omega(z) \,dV(z)\right|\leq \left\| \mathcal{T}_\mu-K\right\|.
\end{align}

Now, we deal with the left-hand side of the inequality \eqref{left-hand-TK}.
Using Fubini's theorem, and also the dominated converge theorem, we see that  
\begin{align*}
\int_{\mathbb{B} } \mathcal{T}_\mu f_{a}(z) \overline{g_{a}(z)} \omega(z) \,dV(z)
=& \int_{\mathbb{B}} \left( \int_{\mathbb{B}} f_a(\xi) \overline{B_z^{\om}(\xi)} \, d\mu(\xi) \right)  \overline{g_a(z)}  \om(z) \, dV(z) \\
=& \int_{\mathbb{B}} \left( \int_{\mathbb{B}} f_a(\xi) B_\xi^{\om}(z) \, d\mu(\xi) \right)  \overline{g_a(z)}  \om(z) \, dV(z) \\
=& \int_{\mathbb{B}} f_a(\xi) \, d\mu(\xi) \int_{\mathbb{B}} \overline{g_a(z)} B_\xi^{\om}(z)   \om(z) \, dV(z) \\
=& \int_{\mathbb{B}} f_a(\xi) \, d\mu(\xi) \overline{\int_{\mathbb{B}} g_a(z) \overline{B_\xi^{\om}(z)} \om(z) \, dV(z)}  \\
=&  \int_{\mathbb{B}} f_a(\xi) \overline{g_a(\xi)}\, d\mu(\xi)\\
=&\int_{\mathbb{B}} f_{a}(z) \overline{g_{a}(z)} \, d\mu(z).
\end{align*}
Consequently, using Lemma \ref{TO-lem2-3} (ii) again, we deduce
\begin{align*}
\left\| \mathcal{T}_\mu-K\right\| 
&\ge \limsup_{|a|\to 1}	\left | \int_{\mathbb{B}} f_{a}(z) \overline{g_{a}(z)} \, d\mu(z) \right |  \\
&\simeq \omega(S_{a})^{1+\frac{1}{p}-\frac{1}{q}} \int_{\mathbb{B} }\left | B_a^{\om}(z) \right |^{2\left ( 1+\frac{1}{p}-\frac{1}{q} \right ) }   \, d\mu(z).
\end{align*}
Let $\delta\in(0,1)$ be the constant in Lemma \ref{TO-lem2-3} (iii), then
\begin{align*}
\mu\left(S_{a_{\delta}}\right)=\int_{S_{a_{\delta}}}d\mu(z)
&\lesssim \omega(S_{a})^{2(1+\frac{1}{p}-\frac{1}{q})} \int_{S_{a_{\delta}}}\left | B_a^{\om}(z) \right |^{2\left ( 1+\frac{1}{p}-\frac{1}{q} \right ) }   \, d\mu(z)  \\
&\leq \omega(S_{a})^{2(1+\frac{1}{p}-\frac{1}{q})} \int_{\mathbb{B}}\left | B_a^{\om}(z) \right |^{2\left ( 1+\frac{1}{p}-\frac{1}{q} \right ) }   \, d\mu(z)
\end{align*}
for $a\in\mathbb{B} \setminus \{0\}$,  which together with Lemma \ref{lem:2.2} (iii) gives 
\begin{align*}
\frac{\mu\left(S_{a_{\delta}}\right)}{\omega(S_{a_{\delta}})^{1+\frac{1}{p}-\frac{1}{q}}} 
&\lesssim \omega(S_{a})^{1+\frac{1}{p}-\frac{1}{q}} \int_{\mathbb{B}}\left | B_a^{\om}(z) \right |^{2\left ( 1+\frac{1}{p}-\frac{1}{q} \right ) }   \, d\mu(z).  
\end{align*}
Bearing in mind Theorem A (I) and the boundness of $\mathcal{T}_\mu$, we may take the supremum of $|a_{\delta}|$ over the interval $(1-\delta,1)$, then
\begin{align*}
\sup_{1-\delta<|a_{\delta}|<1} \frac{\mu\left(S_{a_{\delta}}\right)}{\omega(S_{a_{\delta}})^{1+\frac{1}{p}-\frac{1}{q}}} \lesssim \omega(S_{a})^{1+\frac{1}{p}-\frac{1}{q}} \int_{\mathbb{B} }\left | B_a^{\om}(z) \right |^{2\left ( 1+\frac{1}{p}-\frac{1}{q} \right ) }   \, d\mu(z).
\end{align*}
Then we get
\begin{align*}
\limsup_{|a|\to1} \frac{\mu\left(S_{a}\right)}{\omega(S_{a})^{1+\frac{1}{p}-\frac{1}{q}}} \lesssim \limsup_{|a|\to1}\omega(S_{a})^{1+\frac{1}{p}-\frac{1}{q}} \int_{\mathbb{B} }\left | B_a^{\om}(z) \right |^{2\left ( 1+\frac{1}{p}-\frac{1}{q} \right ) }   \, d\mu(z) \lesssim \left\| \mathcal{T}_\mu-K\right\|.	
\end{align*}
Since the compact operator $K$ from $A_{\omega}^{p}$ into $A_{\omega}^{q}$ is arbitrary, it follows that
\begin{align*}
\limsup_{|a|\to1} \frac{\mu\left(S_{a}\right)}{\omega(S_{a})^{1+\frac{1}{p}-\frac{1}{q}}}\lesssim\left\|\mathcal{T}_\mu\right\|_{e}.
\end{align*}
This finishes the proof of the lower estimate.
\end{proof}

\begin{proof}[Proof of the upper estimate]
For any fixed $0<t<1$ and $f\in A_{\omega}^{p}$, split the integral $\mathcal{T}_\mu f$ into two parts:
\begin{align*}
\mathcal{T}_\mu f(z) &= \int_{\mathbb{B}} f(\xi) \overline{B_z^\omega(\xi)} \, d\mu(\xi)\\
&=\int_{t\mathbb{B}} f(\xi) \overline{B_z^\omega(\xi)} \, d\mu(\xi)
+\int_{\mathbb{B}-t\mathbb{B}} f(\xi) \overline{B_z^\omega(\xi)} \, d\mu(\xi) \\
&=T_1f(z)+T_2f(z),	
\end{align*}
where
\begin{align*}
&T_1f(z)=\int_{t\mathbb{B}} f(\xi) \overline{B_z^\omega(\xi)} \, d\mu(\xi),\\
&T_2f(z)=\int_{\mathbb{B}-t\mathbb{B}} f(\xi) \overline{B_z^\omega(\xi)} \, d\mu(\xi).
\end{align*}
We first claim that $T_1:A_{\omega}^{p}\to A_{\omega}^{q}$ is compact. By Lemma \ref{lem:2.4}, it suffices to prove that for arbitrary uniformly bounded sequence $\left \{ f_k \right \}\subseteq A_{\omega}^{p}$ converging to 0 uniformly on compact subsets of $\mathbb{B}$, $\left \| T_1f_k \right \|_{A_{\omega}^{q}}\to 0$ as $k\to\infty$. With Lemma \ref{TO-lem2-3} (i), one readily sees that 
\begin{align*}
\left|T_1f_k(z)\right| &\le \int_{t\mathbb{B}} \left | f_k(\xi) \overline{B_z^\omega(\xi)} \right |  \, d\mu(\xi) \\
&\le \mu(\mathbb{B})\left \| B_{tz}^\omega\right \|_{H^{\infty}} \sup_{|\xi|<t}|f_k(\xi)| \\
&\lesssim \frac{\mu(\mathbb{B})}{\omega (S_{tz} )}  \sup_{|\xi|<t}|f_k(\xi)|.
\end{align*}
This yields $\left \| T_1f_k \right \|_{A_{\omega}^{q}}\to 0$ as $k\to\infty$, and then $T_1$ should be compact.

We next consider $T_2$. By the fact $\chi_{D(\xi,r)}(u)=\chi_{D(u,r)}(\xi)$, applying \cite[Lemma 2.24]{Zk2005} and Fubini's theorem, we deduce
\begin{align*}
\left|T_2f(z)\right|&\le\int_{\mathbb{B}-t\mathbb{B}} \left | f(\xi) \overline{B_z^\omega(\xi)} \right |  \, d\mu(\xi) \\
&\lesssim\int_{\mathbb{B}-t\mathbb{B}} \frac{1}{\left ( 1-|\xi| \right )^{n+1}  }\int_{D(\xi,r)} \left | f(u) \overline{B_z^\omega(u)} \right |  \,dV(u) d\mu(\xi) \\
&=\int_{\mathbb{B}-t\mathbb{B}} \frac{1}{\left ( 1-|\xi| \right )^{n+1}  }\int_{\mathbb{B}}  \chi _{D(\xi,r)}(u)   \left | f(u) \overline{B_z^\omega(u)} \right |  \,dV(u) d\mu(\xi)\\
&=\int_{\mathbb{B}}\left | f(u) \overline{B_z^\omega(u)} \right |  \,dV(u) \int_{\mathbb{B}-t\mathbb{B}}  \frac{\chi _{D(u,r)}(\xi)}{\left ( 1-|\xi| \right )^{n+1}  }   d\mu(\xi).\\
\end{align*}
Note that $\beta(u,\xi)<r$, then $1-|u|\simeq1-|\xi|$ by \cite[Lemma 2.20]{Zk2005} and hence $\hat{\omega}(u)\simeq\hat{\omega}(\xi)$ by Lemma \ref{lem:2.2} (iii). This together with Lemma \ref{lem:2.2} (ii) gives
\begin{align*}
\left|T_2f(z)\right|	
&\lesssim \int_{\mathbb{B}}\left | f(u) \overline{B_z^\omega(u)}\right| \frac{\left ( 1-|u| \right )^{-1+\left(\frac{1}{p}-\frac{1}{q}\right)n } \hat{\omega}(u)^{1+\frac{1}{p}-\frac{1}{q}}}{\left ( 1-|u| \right )^{-1+\left(\frac{1}{p}-\frac{1}{q}\right)n } \hat{\omega}(u)^{1+\frac{1}{p}-\frac{1}{q}}}    \,dV(u) 
\int_{\mathbb{B}-t\mathbb{B}}  \frac{\chi _{D(u,r)}(\xi)}{\left ( 1-|\xi| \right )^{n+1}  }   d\mu(\xi) \\
&\simeq \int_{\mathbb{B}}\left | f(u) \overline{B_z^\omega(u)}\right| \left ( 1-|u| \right )^{-1+\left(\frac{1}{p}-\frac{1}{q}\right)n } \hat{\omega}(u)^{1+\frac{1}{p}-\frac{1}{q}}   \,dV(u) 
\int_{\mathbb{B}-t\mathbb{B}}  \frac{\chi _{D(u,r)}(\xi)}{\left[ \left(1-|\xi|\right)^{n} \hat{\omega} (\xi) \right]^{1+\frac{1}{p}-\frac{1}{q}}}   d\mu(\xi) \\
&\simeq \int_{\mathbb{B}}\left | f(u) \overline{B_z^\omega(u)}\right| \left ( 1-|u| \right )^{-1+\left(\frac{1}{p}-\frac{1}{q}\right)n } \hat{\omega}(u)^{1+\frac{1}{p}-\frac{1}{q}}   \,dV(u) \int_{\mathbb{B}-t\mathbb{B}}  \frac{\chi _{D(u,r)}(\xi)}{\omega(S_{\xi})^{1+\frac{1}{p}-\frac{1}{q}}}   d\mu(\xi) \\
&\leq \sup_{\substack{u\in D(\xi,r) \\ \xi\in\mathbb{B}-t\mathbb{B}}} \frac{\mu\left ( D(u,r) \right ) }{\omega(S_{u})^{1+\frac{1}{p}-\frac{1}{q}}} \int_{\mathbb{B}}\left | f(u) \overline{B_z^\omega(u)}\right| \left ( 1-|u| \right )^{-1+\left(\frac{1}{p}-\frac{1}{q}\right)n } \hat{\omega}(u)^{1+\frac{1}{p}-\frac{1}{q}}   \,dV(u)\\
&\simeq \sup_{\substack{u\in D(\xi,r) \\ \xi\in\mathbb{B}-t\mathbb{B}}} \frac{\mu\left ( D(u,r) \right ) }{\omega(S_{u})^{1+\frac{1}{p}-\frac{1}{q}}}
\int_{\mathbb{B}}\left | f(u) \overline{B_z^\omega(u)}\right| \left ( 1-|u| \right )^{\left(\frac{1}{p}-\frac{1}{q}\right)(n+1) } \omega(u)^{1+\frac{1}{p}-\frac{1}{q}}  \,dV(u),
\end{align*}
where the last asymptotic equality follows by employing Lemma \ref{intergral-eq}.
Set
\begin{align*}
\mathcal{T}f(z):=\int_{\mathbb{B}}\left | f(u) \overline{B_z^\omega(u)}\right| \left ( 1-|u| \right )^{\left(\frac{1}{p}-\frac{1}{q}\right)(n+1) } \omega(u)^{1+\frac{1}{p}-\frac{1}{q}}  \,dV(u).
\end{align*}
Then
\begin{align}\label{T2-T-f}
\left\|T_2f\right\|_{A_{\omega}^{q}}\lesssim\sup_{\substack{u\in D(\xi,r) \\ \xi\in\mathbb{B}-t\mathbb{B}}} \frac{\mu\left ( D(u,r) \right ) }{\omega(S_{u})^{1+\frac{1}{p}-\frac{1}{q}}}\left\|\mathcal{T}f\right\|_{L_{\omega}^{q}}
\end{align}
and we proceed to estimate the norm $\left\|\mathcal{T}f\right\|_{L_{\omega}^{q}}$ for $f\in A_{\omega}^{p}$.

Now, H\"{o}lder's inequality together with \eqref{schur-1-q'} in Proposition \ref{lem:schur} show that 
\begin{align*}
\mathcal{T}f(z)
=&\int_{\mathbb{B}}\frac{\left | f(u) \overline{B_z^\omega(u)}\right|h(u)}{h(u)}  \left ( 1-|u| \right )^{\left(\frac{1}{p}-\frac{1}{q}\right)(n+1) } \omega(u)^{1+\frac{1}{p}-\frac{1}{q}}  \,dV(u) \\ 
\leq &\left( \int_{\mathbb{B}} \left|B_z^\omega(u)\right| h(u)^{q'} \omega(u) \,dV(u) \right)^{\frac{1}{q'}}  \left ( \int_{\mathbb{B}} \frac{ \left| B_z^{\omega}(u) \right| |f(u)|^{q} ( 1-|u|)^{\left(\frac{q}{p}-1\right)(n+1) } \omega(u)^{\frac{q}{p}}}{h(u)^{q} }  \,dV(u)
\right )^{\frac{1}{q}} \\
\lesssim & h(z) \left ( \int_{\mathbb{B}} \frac{ \left| B_z^{\omega}(u) \right| |f(u)|^{q} ( 1-|u|)^{\left(\frac{q}{p}-1\right)(n+1) } \omega(u)^{\frac{q}{p}}}{h(u)^{q} }  \,dV(u)
\right )^{\frac{1}{q}}.
\end{align*}	
Further, Fubini's theorem and \eqref{schur-2-q} in Proposition \ref{lem:schur} give
\begin{align*}
\left\|\mathcal{T}f\right\|_{L_{\omega}^{q}}^{q}=&\int_{\mathbb{B}}\left| \mathcal{T}f(z) \right|^{q}\omega(z)\,dV(z)   \\
\lesssim & \int_{\mathbb{B}} h(z)^{q} \omega(z)\,dV(z)
\int_{\mathbb{B}} \frac{ \left| B_z^{\omega}(u) \right| |f(u)|^{q} ( 1-|u|)^{\left(\frac{q}{p}-1\right)(n+1) } \omega(u)^{\frac{q}{p}}}{h(u)^{q} } \,dV(u)\\
=&\int_{\mathbb{B}} \frac{ |f(u)|^{q} ( 1-|u|)^{\left(\frac{q}{p}-1\right)(n+1) } \omega(u)^{\frac{q}{p}}}{h(u)^{q} } \,dV(u)
\int_{\mathbb{B}} \left|B_z^{\omega}(u)\right| h(z)^{q} \omega(z)\,dV(z)\\
\lesssim&\int_{\mathbb{B}} |f(u)|^{q} ( 1-|u|)^{\left(\frac{q}{p}-1\right)(n+1) } \omega(u)^{\frac{q}{p}} \,dV(u).
\end{align*}

Let $W(t)=\frac{\hat{\omega}(t)}{1-t}$, then $\hat{W}=\hat{\omega}$, $W\in\mathcal{R}$ and $\left\|\cdot\right\|_{A_{W}^{p}}\simeq\left\|\cdot\right\|_{A_{\omega}^{p}}$ for $0<p<\infty$.  
The subharmonicity of $f$ follows
\begin{align*}
\left\|\mathcal{T}f\right\|_{L_{\omega}^{q}}^{q}
\lesssim &\int_{\mathbb{B}}  
\left ( \frac{1}{W\left( D(u,r) \right)} \int_{D(u,r)} |f(\zeta)|^{p} W(\zeta)\, dV(\zeta) \right )^{\frac{q}{p}}   
( 1-|u|)^{\left(\frac{q}{p}-1\right)(n+1) } \omega(u)^{\frac{q}{p}} \,dV(u).
\end{align*}
Using the fact that $W\in\mathcal{R}$, $\hat{W}=\hat{\omega}$ and \cite[Proposition 3.1]{Djun2025TO}, we get 
$$W\left( D(u,r) \right)\simeq W\left( S_u \right)\simeq(1 - |u|)^{n} \hat{W}(u)
=(1 - |u|)^{n} \hat{\omega}(u).$$
This observation, combined with Lemma \ref{intergral-eq} shows that
\begin{align*}
\left\|\mathcal{T}f\right\|_{L_{\omega}^{q}}^{q}
\lesssim& \int_{\mathbb{B}}  
\left ( \frac{1}{(1 - |u|)^{n} \hat{\omega}(u)} \int_{D(u,r)} |f(\zeta)|^{p} W(\zeta)\, dV(\zeta) \right )^{\frac{q}{p}}   
( 1-|u|)^{\left(\frac{q}{p}-1\right)(n+1) } \omega(u)^{\frac{q}{p}} \,dV(u)\\
\simeq & \int_{\mathbb{B}}  \frac{1}{( 1-|u|)^{n+1}} 
\left (  \int_{D(u,r)} |f(\zeta)|^{p} W(\zeta)\, dV(\zeta) \right )^{\frac{q}{p}}  \,dV(u)\\
= & \int_{\mathbb{B}}  \frac{1}{( 1-|u|)^{n+1}} 
\left (  \int_{\mathbb{B}} \chi_{D(u,r)}(\zeta)  |f(\zeta)|^{p} W(\zeta)\, dV(\zeta) \right )^{\frac{q}{p}}  \,dV(u).\\
\end{align*}
Applying Minkowski's integral inequality to the right-hand side and recognizing that $\chi_{D(u,r)}(\zeta)=\chi_{D(\zeta,r)}(u)$, the above formula becomes
\begin{align*}
\left\|\mathcal{T}f\right\|_{L_{\omega}^{q}}^{q}
\lesssim & \left\{ \int_{\mathbb{B}} |f(\zeta)|^{p} W(\zeta)\, dV(\zeta)
\left( \int_{\mathbb{B}} \frac{\chi_{D(\zeta,r)}(u)}{( 1-|u|)^{n+1}} \,dV(u) \right)^{\frac{p}{q}}   \right\}^{\frac{q}{p}}  \\
\simeq& \left( \int_{\mathbb{B}} |f(\zeta)|^{p} W(\zeta)\, dV(\zeta) \right)^{\frac{q}{p}}\\
=&\left\|f\right\|_{A_{W}^{p}}^{q} \simeq\left\|f\right\|_{A_{\omega}^{p}}^{q}. 
\end{align*}
From the estimate \eqref{T2-T-f}, the operator norm of $T_2$ satisfies:
\begin{align*}
\left\|T_2\right\| \lesssim\sup_{\substack{u\in D(\xi,r) \\ \xi\in\mathbb{B}-t\mathbb{B}}} \frac{\mu\left ( D(u,r) \right ) }{\omega(S_{u})^{1+\frac{1}{p}-\frac{1}{q}}}.
\end{align*}
By the definition of the essential norm and subadditivity, we deduce
\begin{align}\label{upper-T}
\left\|\mathcal{T}_\mu\right\|_{e}=\left\|T_1+T_2\right\|_{e}\le\left\|T_2\right\| &\lesssim\sup_{\substack{u\in D(\xi,r) \\ \xi\in\mathbb{B}-t\mathbb{B}}} \frac{\mu\left ( D(u,r) \right ) }{\omega(S_{u})^{1+\frac{1}{p}-\frac{1}{q}}}.   
\end{align}
Taking $t\to1$, we further refine the estimate to 
\begin{align}\label{upper-T-last}
\left\|\mathcal{T}_\mu\right\|_{e}
&\lesssim \limsup_{|u|\to 1^{-}}  \frac{\mu\left ( D(u,r) \right ) }{\omega(S_{u})^{1+\frac{1}{p}-\frac{1}{q}}} 
\simeq \limsup_{|u|\to 1^{-}} \frac{\mu(S_u)}{\omega(S_u)^{1+\frac{1}{p}-\frac{1}{q}}}.
\end{align}
The last asymptotic equality in \eqref{upper-T-last} follows from Lemma \ref{hat-d-eq-cd} and Lemma \ref{lem:2.2} (i), which asserts that $\omega(S_{u})\simeq(1 - |u|)^{n}\hat{\omega}(u)\in \mathcal{R}$ as \(|u| \to 1\).
This completes the proof of the upper estimate. 
\end{proof}

\begin{Remark}
	It is noteworthy that when $p=q=2$, the condition $\om\in\mathcal{D}$ can be relaxed to $\om\in\hat{\mathcal{D}}$ and the conclusion in Theorem \ref{main-thm} still holds. This corresponds to the special case $k=0$ of Theorem 4 in \cite{DjLsDQ-2022}.
\end{Remark}

\end{document}